\renewcommand{\@evenfoot}{\hfil - \thepage\ - \hfil}
\renewcommand{\@oddfoot}{\hfil - \thepage\ - \hfil}
\newcounter{theo}
\newenvironment{thm}
{\noindent\hangafter=1\hangindent=15pt\refstepcounter{theo}\textsc{Theorem \thetheo. --}\begin{sffamily}}
{\end{sffamily}\par}
\newenvironment{thm*}
{\noindent\hangafter=1\hangindent=15pt\refstepcounter{theo}\textsc{Theorem --}\begin{sffamily}}
{\end{sffamily}\par}
\newenvironment{thmnn}
{\noindent\hangafter=1\hangindent=15pt\textsc{Theorem --}\begin{sffamily}}
{\end{sffamily}\par}
\newenvironment{thmt*}[1]
{\noindent\hangafter=1\hangindent=15pt\refstepcounter{theo}\textsc{Theorem -- #1}\\\begin{sffamily}}
{\end{sffamily}\par}
\newenvironment{prop}
{\noindent\hangafter=1\hangindent=15pt\refstepcounter{theo}\textsc{Proposition \thetheo. --}\begin{sffamily}}
{\end{sffamily}\par}
\newenvironment{prop*}
{\noindent\hangafter=1\hangindent=15pt\textsc{Proposition. --}\begin{sffamily}}
{\end{sffamily}\par}
\newenvironment{lemme}
{\noindent\hangafter=1\hangindent=15pt\refstepcounter{theo}\textsc{Lemma \thetheo. -- }\begin{sffamily}}
{\end{sffamily}\par}
\newenvironment{lemme*}
{\noindent\hangafter=1\hangindent=15pt\refstepcounter{theo}\textsc{Lemma. -- }\begin{sffamily}}
{\end{sffamily}\par}
\newenvironment{coroll}
{\noindent\hangafter=1\hangindent=15pt\refstepcounter{theo}\textsc{Corollary \thetheo. -- }\begin{sffamily}}
{\end{sffamily}\par}
\newenvironment{corollnn}
{\noindent\hangafter=1\hangindent=15pt\textsc{Corollary. -- }\begin{sffamily}}
{\end{sffamily}\par}
\newenvironment{definition}
{\noindent\hangafter=1\hangindent=15pt\textsc{Definition. -- }\begin{sffamily}}
{\end{sffamily}\par}
\newenvironment{proof}
{\noindent\textsc{Proof.}\begin{rmfamily}\setlength{\parskip}{2pt}\setlength{\parindent}{0pt}}
{\end{rmfamily}\hfill$\square$\par}
\newcommand{\bP}{\mathbb{P}}
\newcommand{\bE}{\mathbb{E}}
\newcommand{\ensemble}[2]{{\left.\left\{#1\,\right|\,#2\right\}}} 
\newcommand{\AS}{\mbox{-a.s.}}
\newcommand{\aE}[1]{E_{#1}} 
\newcommand{\aP}[1]{P_{#1}}
\newcommand{\aPo}{\aP{0}}
\newcommand{\qE}[1]{E_{{#1},\omega}} 
\newcommand{\qP}[1]{P_{{#1},\omega}}
\newcommand{\qPo}{\qP{0}}
\newcommand{\Ht}{\widetilde{H}} 
\newcommand{\Er}{\mathcal{E}} 
\newcommand{\eps}{\varepsilon}
\newcommand{\limite}[1]{\xrightarrow[#1]{}}
\newcommand{\R}{\mathbb{R}}
\newcommand{\Z}{\mathbb{Z}}
\newcommand{\ot}{{\widetilde{o}}} 
\newcommand{\eqd}{\begin{eqnarray*}} 
\newcommand{\eqf}{\end{eqnarray*}} 
\newcommand{\dir}{\mathcal{D}}
\begin{document}

\begin{center}
\Large Integrability of exit times and ballisticity for random walks in~Dirichlet~environment

\vspace{.5cm}

\large
Laurent Tournier\footnote{Universit\'e de Lyon ; universit\'e Lyon 1 ; Institut Camille Jordan CNRS UMR 5208 ; 43, boulevard du 11 novembre 1918, F-69622 Villeurbanne Cedex. \emph{E-mail : }{\tt tournier@math.univ-lyon1.fr}}

\vspace{1cm}

\begin{minipage}{16cm}
\footnotesize
We prove an equivalent condition of integrability for Green functions and the exit time of random walks in random Dirichlet environment on finite digraphs, and apply this result to improve the ballisticity criterion given by Enriquez and Sabot in~\cite{EnriquezSabot06}. 
\end{minipage}
\end{center}

\section{Introduction}

Since their introduction in the 70's, models of random walks in random environment have mostly been studied in the one dimensional case. Using specific features of this setting, like the reversibility of the Markov chain, Solomon~\cite{Solomon75} set a first milestone by proving simple explicit necessary and sufficient conditions for transience, and a law of large numbers. In contrast, the multidimensional situation is still poorly understood. A first general transience criterion was provided by Kalikow~\cite{Kalikow}, which Sznitman and Zerner~\cite{SznitmanZerner} later proved to imply ballisticity as well. Under an additional uniform ellipticity hypothesis, Sznitman (\cite{Sznitman01}, \cite{Sznitman02}) could weaken this ballisticity criterion, but not much progress was made since then about the delicate question of sharpening transience or ballisticity criterions.

Another approach consists in deriving explicit conditions in more specific random environments. Among them, Dirichlet environments, first studied by Enriquez and Sabot in~\cite{EnriquezSabot06}, appear as a natural choice because of their connection with oriented edge linearly reinforced random walks (cf.~\cite{EnriquezSabot02}). Another interest in this case comes from the existence of algebraic relations involving Green functions. These allowed Enriquez and Sabot to show that Kalikow's criterion is satisfied under some simple condition, thus proving ballistic behaviour, and to give estimates of the limiting velocity. 

Defining Kalikow's criterion raises the problem of integrability of Green functions on finite subsets. While this property is very easily verified for a uniformly elliptic environment, it is no longer the case in the Dirichlet situation. In~\cite{EnriquezSabot06}, the condition on the environment allowed for a quick proof, and the general case remained unanswered. 

The main aim of this article is to state and prove a simple necessary and sufficient condition of integrability of these Green functions in Dirichlet environment on general directed graphs. Integrability conditions for exit times are then easily deduced. The "sufficiency" part of the proof is the more delicate. It procedes by induction on the size of the graph by going through an interesting quotienting procedure. 

This sharpening of the integrability criterion, along with an additional trick, allows us to prove a refined version of Enriquez and Sabot's ballisticity criterion. The condition of non integrability may also prove useful in further analysis of random walks in Dirichlet environment. Indeed, finite subsets with non integrable exit times play the role of "strong traps" for the walk. As a simple example, one can prove that the existence of such a subset implies a null limiting velocity. 

Next section introduces the notations, states the results and various corollaries. Section~\ref{sec:proof} contains the proofs of the main result and corollary. Finally, section~\ref{sec:bali} proves the generalization of Enriquez and Sabot's criterion. 

\section{Definitions and statement of the results}

\subsection{Dirichlet distribution}\label{sec:dir}

Let us first recall a few useful properties of the usual Dirichlet distribution. 
Let $I$ be a finite set. The set of probability distributions on $I$ is denoted by $\mathrm{Prob}(I)$ : 
$$\textstyle\mathrm{Prob}(I)=\ensemble{(p_i)_{i\in I} \in \R_+^I}{\sum_{i\in I} p_i =1}.$$
Given a family $(\alpha_i)_{i\in I}$ of positive real numbers, the {\bf Dirichlet distribution} of parameter $(\alpha_i)_{i\in I}$ is the probability distribution $\dir((\alpha_i)_{i\in I})$ on $\mathrm{Prob}(I)$ (the set of probability distributions on $I$) of density : 
\begin{equation}
(x_i)_{i\in I}\mapsto\frac{\Gamma(\sum_{i\in I}\alpha_i)}{\prod_{i\in I}\Gamma(\alpha_i)}\prod_{i\in I}x_i^{\alpha_i-1} \tag{$*$}
\end{equation}
with respect to the Lebesgue measure $\prod_{i\neq i_0} dx_i$ (where $i_0$ is any element of $I$) on the simplex $\mathrm{Prob}(I)$. Notice that if $(p_1,p_2)$ is a random variable sampled according to distribution $\dir(\alpha,\beta)$, then $p_1$ is a Beta variable of parameter $(\alpha,\beta)$. 
An easy computation shows that if $(p_1,\ldots,p_n)$ is a random variable sampled according to $\dir(\alpha_1,\ldots,\alpha_n)$ then, for $i=1,\ldots,n$, the expected value of $p_i$ is $\frac{\alpha_i}{\sum_{1\leq j\leq n} \alpha_j}.$

The following two important properties are simple consequences of the representation of a Dirichlet random variable as a renormalized vector of independent gamma random variables (cf. for instance~\cite{Wilks}). Let $(p_i)_{i\in I}$ be a random variable distributed according to $\dir((\alpha_i)_{i\in I})$. Then : 
\begin{description}
	\item[(Associativity)] Let $I_1,\ldots,I_n$ be a partition of $I$. The random variable $\left(\sum_{i\in I_k}p_i \right)_{k\in\{1,\ldots,n\}}$ on $\mathrm{Prob} (\{1,\ldots,n\})$ follows the Dirichlet distribution $\dir((\sum_{i\in I_k} \alpha_i)_{1\leq k\leq n})$. 
	\item[(Restriction)] Let $J$ be a nonempty subset of $I$. The random variable $\left(\frac{p_i}{\sum_{j\in J} p_j}\right)_{i\in J}$ on $\mathrm{Prob}(J)$ follows the Dirichlet distribution $\dir((\alpha_i)_{i\in J})$ and is independent of $\sum_{j\in J}p_j$ (which follows a Beta distribution $B(\sum_{j\in J}\alpha_j, \sum_{j\notin J} \alpha_j)$ due to the associativity property). 
\end{description}

\subsection{Definition of the model}

In order to deal with multiple edges, we define a {\bf directed graph} as a quadruplet $G=(V,E,\mathit{head},\mathit{tail})$ where $V$ and $E$ are two sets whose elements are respectively called the {\bf vertices} and {\bf edges} of $G$, endowed with two maps $\mathit{head}:e\mapsto\overline{e}$ and $\mathit{tail}:e\mapsto\underline{e}$ from $E$ to $V$. An edge $e\in E$ is thought of as an oriented link from $\underline{e}$ (\emph{tail}) to $\overline{e}$ (\emph{head}), and the usual definitions apply. Thus, a vertex $x$ is {\bf connected} to a vertex $y$ in $G$ if there is an oriented path from $x$ to $y$, i.e. a sequence $e_1,\ldots,e_n$ of edges with $\overline{e_k}=\underline{e_{k+1}}$ for $k=1,\ldots,n-1$, $\underline{e_1}=x$ and $\overline{e_n}=y$. For brevity, we usually only write $G=(V,E)$, the tail and head of an edge $e$ being always denoted by $\underline{e}$ and $\overline{e}$. 

In the following, we will usually deal with graphs $G=(V\cup\{\partial\},E)$ possessing a {\bf cemetery vertex}~$\partial$. In this case, we always suppose that :
\begin{enumerate}
	\item $\partial$ is a dead end: no edge in $E$ exits this vertex, and random walks remain stuck at this point once they have reached it ; 
	\item every vertex is connected to $\partial$. 
\end{enumerate}

Let $G=(V\cup\{\partial\},E)$ be such a graph. For all $x\in V$, let $\mathcal{P}_x$ designate the set of probability distributions on the set of edges originating at~$x$: $$\textstyle\mathcal{P}_x=\ensemble{(p_e)_{e\in E,\,\underline{e}=x}\in\R_+^\ensemble{e\in E}{\underline{e}=x}}{\sum_{e\in E,\,\underline{e}=x} p_e =1 }.$$
Then the set of {\bf environments} is $\Omega=\prod_{x\in V}\mathcal{P}_x\subset \R^E$. We will denote by $\omega=(\omega_e)_{e\in E}$ the canonical random variable on $\Omega$. 

Given a family $\vec{\alpha}=(\alpha_e)_{e\in E}$ of positive weights indexed by the set of edges of $G$, one can then define {\bf Dirichlet distribution on environments} of parameter $\vec{\alpha}$: this distribution on $\Omega=\prod_{x\in V}\mathcal{P}_x$ is the product measure of Dirichlet distributions on each of the $\mathcal{P}_x$, $x\in V$ : $$\bP=\bP^{(\vec{\alpha})} = \bigotimes_{x\in V} \dir((\alpha_e)_{e\in E,\ \underline{e}=x}).$$ Note that this distribution does not satisfy the usual uniform ellipticity condition: there is no positive constant bounding $\bP$-almost surely the transition probabilities $\omega_e$ from above. 

In the case of $\mathbb{Z}^d$, we always consider translation invariant distributions of environments, hence the parameters are identical at each vertex and we only need to be given a $2d$-uplet $(\alpha_e)_{e\in\mathcal{V}}$ where $\mathcal{V}=\ensemble{e\in\Z^d}{|e|=1}$. 

For any environment $\omega\in\Omega$, and $x\in V$, we denote by $P_{x,\omega}$ the law of the Markov chain starting at $x$ with transition probabilities given by $\omega$, and by $(X_n)_{n\geq0}$ the canonical process on $V$. The {\bf annealed law starting at $x\in V$} is then the following averaged distribution on random walks on $G$: $$P_x(\cdot)=\int P_{x,\omega}(\cdot) \bP(d\omega) = \bE[P_{x,\omega}(\cdot)].$$ 

\label{par:simplification}The associativity property of the Dirichlet distribution allows to reduce graphs with multiple edges between two vertices to simple-edged ones. Consider indeed the graph $G'=(V,E')$ deduced from $G$ by replacing multiple (oriented) edges by single ones bearing weights $\alpha_{e'}$ equal to the sum of the weights of the edges they replace. On $G$, the quenched laws $P_{x_0,\omega}$ depend only on the sums $\sum_{\underline{e}=x,\,\overline{e}=y} \omega_e$ for $x,y\in V$ and, thanks to associativity, the joint law under $\bP$ of these sums is the Dirichlet distribution relative to the graph $G'$. Hence the annealed laws on $G$ and $G'$ are the same and, for the problems we are concerned with, we may use $G'$ instead of $G$. We may therefore assume that, unless otherwise explicitly stated, \emph{all graphs to be considered do not have multiple edges}. We then denote by $(x,y)$ \emph{the} edge from $x$ to $y$, and we usually write $\omega(x,y)$ instead of $\omega_{(x,y)}$. 

We will need the following stopping times: $T_A=\inf\ensemble{n\geq 1}{(X_{n-1},X_n)\notin A}$ for $A\subset E$, $T_U=\inf\ensemble{n\geq 0}{X_n\notin U}$ for $U\subset V$ and, for every vertex $x$, $H_x=\inf\ensemble{n\geq 0}{X_n=x}$ and $\widetilde{H_x}=\inf\ensemble{n\geq 1}{X_n=x}$. 

If the random variable $N_y$ denotes the number of visits of $(X_n)_{n\geq0}$ at site $y$, then the {\bf Green function} $G^\omega$ of the random walk in the environment $\omega$ is given by: $$\mbox{for all $x,y\in V$, }G^\omega(x,y)=E_{x,\omega}[N_y]=\sum_{n\geq 0}P_{x,\omega}(X_n=y).$$

Due to the assumptions (i) and (ii), $G^\omega(x,y)$ is $\bP$-almost surely finite for all $x,y\in V$. The question we are concerned with is the integrability of these functions under $\bP$, according to the value of $\vec{\alpha}$. 

\subsection{Integrability conditions}

The main quantity involved in our conditions is the sum of the coefficients $\alpha_e$ over the edges $e$ exiting some set. Let us give a few last notations about sets of edges. For every subset $A$ of $E$, define: 
\eqd
\underline{A} & = & \ensemble{\underline{e}}{e\in A}\subset V,\\
\overline{A} & = & \ensemble{\overline{e}}{e\in A}\subset V,\\
\overline{\underline{A}} & = & \ensemble{\underline{e}}{e\in A}\cup\ensemble{\overline{e}}{e\in A}\subset V,\\
\partial_E A & = & \ensemble{e\in E\setminus{A}}{\underline{e}\in\underline{A}}\subset E,
\eqf
and the sum of the coefficients of the edges "exiting $A$":
$$\beta_A=\sum_{e\in \partial_E A}\alpha_e.$$
$A$ is said to be {\bf strongly connected} if, for all $x,y\in\overline{\underline{A}}$, $x$ is {\bf connected} to $y$ in $A$, i.e. there is an (oriented) path from $x$ to $y$ through edges in $A$. 

Our main result is the following:

\begin{thm} \label{thm:main}
Let $G=(V\cup\{\partial\},E)$ be a finite directed graph and $\vec{\alpha}=(\alpha_e)_{e\in E}$ a family of positive real numbers. We denote by $\bP$ the Dirichlet distribution with parameter $\vec{\alpha}$. Let $o\in V$. For every $s>0$, the following statements are equivalent: 
\begin{enumerate}
	\item $\bE[G^\omega(o,o)^s]<\infty$; 
	\item for every strongly connected subset $A$ of $E$ such that $o\in\underline{A}$, $\beta_A>s$. 
\end{enumerate}
\end{thm}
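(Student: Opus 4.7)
My plan is to prove the two implications separately, with the sufficiency direction being the delicate one.

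For necessity (i)$\Rightarrow$(ii), I argue by contraposition. Suppose some strongly connected $A$ with $o\in\underline A$ has $\beta_A\leq s$; set $q_x=\sum_{e\in\partial_E A,\,\underline e=x}\omega_e$ for $x\in\underline A$. By the associativity property each $q_x$ is $\mathrm{Beta}(\beta_A^x,\gamma_A^x)$-distributed, with $\beta_A^x=\sum_{e\in\partial_E A,\,\underline e=x}\alpha_e$ and $\gamma_A^x=\sum_{e\in A,\,\underline e=x}\alpha_e$, and these variables are independent across $x\in\underline A$, so $\bP(\max_x q_x\leq\eps)\asymp\eps^{\beta_A}$ as $\eps\to 0$. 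By the restriction property, conditionally on $(q_x)_x$ the normalized family $\tilde\omega_e:=\omega_e/(1-q_{\underline e})$ for $e\in A$ is a Dirichlet environment on $A$, irreducible since $A$ is strongly connected. On $\{\max_x q_x\leq\eps\}$ the walk killed at exit from $A$ has per-step kill rate at most $\eps$, so an Abel-type invariant-measure estimate in the $\tilde\omega$-chain yields $G^\omega(o,o)\geq G^\omega_A(o,o)\geq c(\tilde\omega)/\eps$ with $c(\tilde\omega)>0$ almost surely (take, e.g., $c(\tilde\omega)=\mu_{\tilde\omega}(o)/2$ where $\mu_{\tilde\omega}$ is the stationary measure of the $\tilde\omega$-chain). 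Summing contributions of the dyadic shells $\{2^{-n-1}<\max_x q_x\leq 2^{-n}\}$ on a set where $c(\tilde\omega)$ is bounded below then gives $\bE[G^\omega(o,o)^s]\gtrsim\sum_n 2^{n(s-\beta_A)}=+\infty$ whenever $\beta_A\leq s$.

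For sufficiency (ii)$\Rightarrow$(i), I argue by strong induction on the number of edges. The strongly connected components of the subgraph of $G$ on $V$ form a DAG, and the walk from $o$ never re-enters its SCC $C$ after leaving it; hence $G^\omega(o,o)=G^\omega_{A_C}(o,o)$ where $A_C$ is the edge set of $C$, and we may assume that $G$ restricted to $V$ is itself strongly connected. The base case $|V|=1$ (one vertex, possibly self-loops) follows from a direct Beta computation: (ii) applied to the strongly connected set of self-loops gives $\alpha_{(o,\partial)}>s$, and $G^\omega(o,o)=1/\omega(o,\partial)$ has finite $s$-th moment. For $|V|\geq 2$, strong connectivity of $G|_V$ produces a cycle through $o$ of length at least $2$, yielding a proper strongly connected $A'\subsetneq E$ with $o\in\underline{A'}$ and $2\leq|\underline{A'}|<|V|$. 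Contract $\underline{A'}$ to a single vertex $a$ to form a strictly smaller graph $\bar G$: by associativity, aggregating the $\alpha_e$'s of edges merged by the contraction produces Dirichlet parameters on $\bar G$. Condition (ii) lifts to $\bar G$ at the same threshold $s$, since any strongly connected subset of the edges of $\bar G$ through $a$ pulls back to a strongly connected subset of $E$ through $\underline{A'}\ni o$ with unchanged boundary weight. The induction hypothesis thus applies both to $\bar G$ and, separately, to the subgraph with edge set $A'$ seen with its own cemetery (whose exit-to-cemetery weights equal the $\partial_E A'$-weights in $G$, strictly greater than $s$ by hypothesis).

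The main obstacle is the final step of the induction: comparing $\bE[G^\omega(o,o)^s]$ to the two smaller-graph quantities. I expect a decomposition bound of the form $G^\omega(o,o)\leq G^\omega_{A'}(o,o)\cdot\bar G^{\bar\omega}(a,a)$, obtained by counting visits to $o$ as (visits per excursion inside $A'$) times (number of excursions, itself bounded by visits of the macroscopic walk to $a$), followed by H\"older's inequality with exponents slightly larger than $s$ on each factor. Two subtleties must be handled: first, the macroscopic walk on $\bar G$ is not itself Markovian, because its transitions depend on the entry point into $\underline{A'}$, so an excursion argument leveraging the irreducibility of $A'$ together with the independence afforded by Dirichlet restriction (the normalized environment inside $A'$ is independent of the aggregated environment on $\bar G$) is needed to dominate it stochastically by a bona fide Dirichlet walk on $\bar G$; second, the H\"older step must be set up so that the strict inequalities of (ii) provide enough slack for the induction hypothesis to apply with exponents strictly greater than $s$ on both factors simultaneously. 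Once these two points are settled, the induction closes.
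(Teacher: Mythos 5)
Your necessity argument is fine, and in fact a legitimate alternative to the paper's: the paper forces a long exit time on the event that all exit weights from $\underline{A}$ are small and then returns to $o$ via a spanning tree of $A$ realized as the set of maximal edges, while you use the restriction property to decouple the normalized environment inside $A$ from the exit weights and invoke the stationary measure (Kac's formula) of the normalized chain; both give $\bP$-probability $\asymp\eps^{\beta_A}$ events on which $G_A^\omega(o,o)\gtrsim 1/\eps$. The problem is the sufficiency half, which is the heart of the theorem: the two ``subtleties'' you defer are genuine gaps, and the second one is fatal to the strategy as stated.

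First, the comparison with a walk on the contracted graph. The environment seen at the contracted vertex $a$ by the true walk is the family of exit weights renormalized by $\Sigma=\sum_{e\in\partial_E A'}\omega_e$, a sum over \emph{different} vertices of $\underline{A'}$; this is not a Dirichlet vector, and associativity does not apply (it only handles the merging of multiple edges). The paper needs a dedicated lemma (the restriction-type Lemma~\ref{lem:restriction}) to dominate this quotient environment by a genuine Dirichlet environment times an \emph{independent} factor $\widetilde{\Sigma}$ with tail exponent $\beta_{C}$. Moreover, the contracted set cannot be an arbitrary deterministic cycle $A'$ through $o$: the paper's comparison $P_{o,\omega}(H_\partial<\widetilde{H}_o)\geq c\,\Sigma\,P_{\widetilde o,\widetilde\omega}(H_\partial<\widetilde{H}_{\widetilde o})$ rests on the uniform lower bound of Proposition~\ref{prop:onC}, which holds only because $C(\omega)$ is built greedily from maximal exit probabilities; for a fixed cycle the probability of reaching a given exit vertex before $\widetilde{H}_o\wedge T_{A'}$ has no uniform lower bound, so your product bound acquires an extra random prefactor with its own heavy tail (and, incidentally, visits per sojourn in $\underline{A'}$ are controlled by the vertex-killed Green function, not by $G^\omega_{A'}$, since the walk may use chords of the cycle). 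Second, and decisively, H\"older cannot close the induction: hypothesis (ii) gives $\beta:=\min_A\beta_A>s$ with no further slack, so if $\beta$ is only slightly larger than $s$ you would need $\bE[X^{sp}]<\infty$ and $\bE[Y^{sq}]<\infty$ with $1/p+1/q=1$, i.e.\ exponents $sp<\beta_X$ and $sq<\beta_Y$, which forces $s/\beta_X+s/\beta_Y<1$ --- in effect a condition of the type $\beta>2s$, strictly stronger than (ii). The sharp threshold requires propagating the tail exponent exactly: the paper's induction hypothesis is the quantitative bound $\bP\bigl(P_{o,\omega}(H_\partial<\widetilde{H}_o)\leq\eps\bigr)\leq C\eps^{\beta}(-\ln\eps)^r$, and the two contributions are combined through the lemma on products of \emph{independent} variables with polynomial tails, which keeps the exponent $\beta_C\wedge\widetilde\beta$ at the cost of a single logarithm, the independence being exactly what Lemma~\ref{lem:restriction} supplies. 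Without an analogue of these two ingredients your induction does not close.
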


\emph{Undirected} graphs are directed graphs where edges come in pair: if $(x,y)\in E$, then $(y,x)\in E$ as well. In this case, the previous result translates into a statement on subsets of $V$. For any $S\subset V$, we denote by $\beta_S$ the sum of the coefficients of the edges "exiting $S$": 
$$\beta_S=\sum_{\underline{e}\in S,\ \overline{e}\notin S} \alpha_e.$$
For any strongly connected subset $A$ of $E$, if $S=\underline{A}$, we have $\beta_S\leq\beta_A$ and equality holds if $A$ contains every edge in $E$ linking vertices of $\underline{A}$ and if the graph contains no loop (i.e. no edge exiting from and heading to the same vertex). This remark yields: 

\begin{thm}
Let $G=(V\cup\{\partial\},E)$ be a finite undirected graph without loop and $(\alpha_e)_{e\in E}$ a family of positive real numbers. We denote by $\bP$ the corresponding Dirichlet distribution. Let $o\in V$. For every $s>0$, the following statements are equivalent: 
\begin{enumerate}
	\item $\bE[G^\omega(o,o)^s]<\infty$;
	\item for all connected subsets $S$ of $V$ such that $\{o\}\subsetneq S$, $\beta_S>s$. 
\end{enumerate}
\end{thm}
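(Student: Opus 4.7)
My plan is to obtain this statement as a direct translation of Theorem~\ref{thm:main}, using the correspondence already sketched in the remark preceding it. Since Theorem~\ref{thm:main} already supplies the equivalence between~(i) and its own edge-flavoured condition~(ii), all that remains is to check that, in an undirected graph without loops, that edge-flavoured condition is equivalent to the vertex-subset condition stated here.

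For the direction \emph{vertex condition $\Rightarrow$ edge condition}, I would start from a strongly connected $A\subset E$ with $o\in\underline{A}$ and set $S=\underline{A}$. Strong connectedness forces $\overline{A}\subset\underline{A}$, so $\overline{\underline{A}}=S$, and the very paths witnessing strong connectedness of $A$ show that $S$ is connected in the undirected sense. The no-loop hypothesis ensures that any edge of $A$ leaving $o$ heads to a vertex different from $o$, whence $\{o\}\subsetneq S$. The remark then yields $\beta_S\leq\beta_A$, so the hypothesis $\beta_S>s$ gives $\beta_A>s$.

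Conversely, given a connected $S$ with $\{o\}\subsetneq S$, I would take $A$ to be the set of all edges with both endpoints in $S$. Because $|S|\geq 2$ and $S$ is connected, each vertex of $S$ has a neighbour in $S$, so $\underline{A}=S\ni o$; and $A$ is strongly connected because each undirected edge within $S$ is available in both directions. The same bookkeeping as in the remark gives $\partial_E A=\{e\in E:\underline{e}\in S,\,\overline{e}\notin S\}$, so $\beta_A=\beta_S$, and Theorem~\ref{thm:main}(ii) supplies $\beta_A>s$, hence $\beta_S>s$.

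There is no substantial obstacle: the real content lies in Theorem~\ref{thm:main} and in the preparatory remark. The only point worth noting is that the strictness $\{o\}\subsetneq S$ in the vertex condition matches exactly the fact that in a loop-free graph a strongly connected $A$ with $o\in\underline{A}$ must satisfy $|\underline{A}|\geq 2$, so that the two conditions really do range over identical collections of subsets.
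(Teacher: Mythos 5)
Your proposal is correct and follows essentially the same route as the paper, which obtains this theorem directly from Theorem~\ref{thm:main} via the preparatory remark that $\beta_S\leq\beta_A$ for $S=\underline{A}$, with equality when $A$ consists of all edges within $S$ and the graph has no loops. Your write-up merely makes explicit the two translations (strongly connected edge sets to connected vertex sets and back) that the paper leaves implicit, and does so correctly.
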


In particular, we get the case of i.i.d. environments in $\mathbb{Z}^d$. Noticing that the non-empty subsets $A$ of edges of $\mathbb{Z}^d$ with smallest "exit sum" $\beta_A$ are the sets containing one single edge, the result may be restated like this: 

\begin{thm} {\label{thm:zd}}
Let $\vec{\alpha}=(\alpha_e)_{e\in\mathcal{V}}$ be a family of positive real numbers. We denote by $\bP$ the translation invariant Dirichlet distribution on environments on $\mathbb{Z}^d$ deduced from $\vec{\alpha}$. Let $U$ be a finite subset of $\mathbb{Z}^d$. Set $\Sigma=\sum_{e\in\mathcal{V}}\alpha_e$. Then for every $s>0$, the following assertions are equivalent: 
\begin{enumerate}
	\item for all $x\in U$, $\bE[G^\omega(x,x)^s]<\infty$;
	\item every edge $e\in\mathcal{V}$ such that there is $x\in U$ with $x+e\in U$ satisfies: $2\Sigma > \alpha_e + \alpha_{-e} + s$. 
\end{enumerate}
\end{thm}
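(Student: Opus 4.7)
The plan is to apply Theorem~\ref{thm:main} to the finite directed graph $G_U=(U\cup\{\partial\},E_U)$ obtained from $\mathbb{Z}^d$ restricted to $U$ by collapsing every vertex of $\mathbb{Z}^d\setminus U$ to a single cemetery $\partial$; the associativity property of the Dirichlet distribution (\S\ref{sec:dir}) ensures that the weights on $G_U$ are simply $\alpha_e$ on each internal edge and the sum of outgoing $\alpha$'s on each edge to $\partial$. Since no edge leaves $\partial$, any strongly connected $A\subset E_U$ must consist exclusively of internal $\mathbb{Z}^d$-edges among the vertices of $S:=\underline{A}\subset U$; setting $k=|S|\geq 2$ one therefore has $\beta_A=k\Sigma-\sum_{e\in A}\alpha_e$.

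The implication (i)$\Rightarrow$(ii) is then immediate by specializing Theorem~\ref{thm:main} to $A=\{(x,x+e),(x+e,x)\}$, which is strongly connected on its two endpoints and satisfies $\beta_A=2\Sigma-\alpha_e-\alpha_{-e}$. For the converse, the main step is the following geometric claim: for every strongly connected $A$ as above, there exists $e^*\in\mathcal{V}$ and $v\in S$ with $v+e^*\in S$ such that $\beta_A\geq 2\Sigma-\alpha_{e^*}-\alpha_{-e^*}$. Combined with assumption (ii) applied to $e^*$ (both endpoints lie in $S\subset U$), this yields $\beta_A>s$ for every admissible $A$, and Theorem~\ref{thm:main} then delivers (i).

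To prove the claim, choose a set $\mathcal{V}^+\subset\mathcal{V}$ of representatives of the antipodal pairs and set $M_{e_0}(S):=|\{v\in S:v+e_0\in S\}|$ (note $M_{e_0}(S)=M_{-e_0}(S)$ by translation). The upper bound $\sum_{e\in A}\alpha_e\leq\sum_{e_0\in\mathcal{V}^+}(\alpha_{e_0}+\alpha_{-e_0})M_{e_0}(S)$, together with $\Sigma=\sum_{e_0\in\mathcal{V}^+}(\alpha_{e_0}+\alpha_{-e_0})$, reduces the claim to producing $e^*$ such that
\[\sum_{e_0\in\mathcal{V}^+}(\alpha_{e_0}+\alpha_{-e_0})\bigl(M_{e_0}(S)-(k-2)\bigr)\leq\alpha_{e^*}+\alpha_{-e^*}.\]
The crux is the elementary observation that $M_{e_0}(S)\leq k-1$, with equality if and only if $S$ is a consecutive segment in the direction $e_0$ (within each $e_0$-fiber of $S$ the count contributed to $M_{e_0}(S)$ is at most the fiber's cardinality minus one, and saturation forces a single fiber consisting of a single run). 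Since two such line segments in distinct antipodal classes would intersect in at most one point, contradicting $k\geq 2$, at most one direction $e_0^{**}\in\mathcal{V}^+$ can saturate the bound. Taking $e^*=e_0^{**}$ when such a direction exists (every other summand is then non-positive), and any direction with $M_{e^*}(S)\geq 1$ otherwise (in which case the whole sum is non-positive), yields the desired inequality. This isoperimetric-type observation is the main technical content; the rest of the argument is bookkeeping on Dirichlet weights.
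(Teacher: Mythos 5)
Your proof is correct and follows essentially the same route as the paper: collapse $\mathbb{Z}^d\setminus U$ to the cemetery $\partial$, apply Theorem~\ref{thm:main} to the resulting finite graph, and observe that the minimal exit sums $\beta_A$ over strongly connected edge sets are attained by the two-edge sets $\{(x,x+e),(x+e,x)\}$, giving $2\Sigma-\alpha_e-\alpha_{-e}$. The only difference is that the paper asserts this minimality in a one-line remark, whereas you supply the fiber-counting argument (at most one direction can have $M_{e_0}(S)=|S|-1$) that actually proves it, which is a legitimate completion of the paper's sketch.
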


Assuming the hypothesis of theorem~\ref{thm:main} relatively to all vertices instead of only one provides information about exit times: 

\begin{coroll} {\label{cor:oriented}}
Let $G=(V\cup\{\partial\},E)$ be a finite directed strongly connected graph and $(\alpha_e)_{e\in E}$ a family of positive real numbers. For every $s>0$, the following properties are equivalent: 
\begin{enumerate}
	\item for every vertex $x$, $\bE[E_{x,\omega}[T_V]^s]<\infty$; 
	\item for every vertex $x$, $\bE[G^\omega(x,x)^s]<\infty$; 
	\item every non-empty strongly connected subset $A$ of $E$ satisfies $\beta_A>s$;
	\item there is a vertex $x$ such that $\bE[E_{x,\omega}[T_V]^s]<\infty$.
\end{enumerate}
\end{coroll}

And in the undirected case: 

\begin{coroll}
Let $G=(V\cup\{\partial\},E)$ a finite connected undirected graph without loop, and $(\alpha_e)_{e\in E}$ a family of positive real numbers. For every $s>0$, the following properties are equivalent:
\begin{enumerate}
	\item for every vertex $x$, $\bE[E_{x,\omega}[T_V]^s]<\infty$; 
	\item for every vertex $x$, $\bE[G^\omega(x,x)^s]<\infty$; 
	\item every connected subset $S$ of $V$ of cardinality $\geq 2$ satisfies $\beta_S>s$;
	\item there is a vertex $x$ such that $\bE[E_{x,\omega}[T_V]^s]<\infty$.
\end{enumerate}
\end{coroll}

\subsection*{Ballisticity criterion}

We now consider the case of random walks in i.i.d. Dirichlet environment on $\Z^d$, $d\geq 1$. 

Let $(e_1,\ldots,e_d)$ denote the canonical basis of $\Z^d$, and $\mathcal{V}=\ensemble{e\in\Z^d}{|e|=1}$. Let $(\alpha_e)_{e\in\mathcal{V}}$ be positive numbers. We will write either $\alpha_i$ or $\alpha_{e_i}$, and $\alpha_{-i}$ or $\alpha_{-e_i}$, $i=1,\ldots,d$. 

Enriquez and Sabot proved ballistic behaviour of the random walk in Dirichlet environment as soon as $\max_{1\leq i\leq d} |\alpha_i-\alpha_{-i}|>1$. Our improvement replaces $l^\infty$-norm by $l^1$-norm:

\begin{thm} \label{thm:bali}
If $\sum\limits_{i=1}^d |\alpha_i-\alpha_{-i}|>1$, then there exists $v\neq 0$ such that, $P_0$-a.s., $\displaystyle\frac{X_n}{n}\to_n v,$
and the following bound holds: 
$$\left| v - \frac{\Sigma}{\Sigma-1}d_m\right|_1 \leq \frac{1}{\Sigma-1},$$
where $\Sigma=\sum_{e\in\mathcal{V}}\alpha_e$, $d_m=\sum_{i=1}^d \frac{\alpha_i-\alpha_{-i}}{\Sigma} e_i$ is the drift in the averaged environment, and $|X|_1=\sum_{i=1}^d |X\cdot e_i|$ for any $X\in\R^d$. 
\end{thm}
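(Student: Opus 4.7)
The plan is to apply Kalikow's criterion in the form of Sznitman--Zerner~\cite{SznitmanZerner}: it suffices to produce $\ell\in\R^d$ and $\varepsilon>0$ such that, for every finite strongly connected $U\ni 0$ and every $x\in U$, the Kalikow drift $\hat{d}_U(x)=\sum_{|e|=1}\hat{\omega}_U(x,x+e)\,e$ satisfies $\hat{d}_U(x)\cdot\ell\geq\varepsilon$, where $\hat{\omega}_U(x,y)=\bE[G^\omega_U(0,x)\,\omega(x,y)]/\bE[G^\omega_U(0,x)]$; this yields $X_n\cdot\ell\to+\infty$ and $X_n/n\to v$ with $v\cdot\ell>0$. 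A preliminary issue is that $\hat{\omega}_U$ is defined only when $\bE[G^\omega_U(0,x)]<\infty$, which is precisely the situation governed by Corollary~\ref{cor:oriented}. When this integrability fails I would regularize by attaching to every vertex an extra ``killing'' edge of small Dirichlet weight $\eps$ leading to $\partial$: on the augmented graph every strongly connected $A$ satisfies $\beta_A\geq \eps\,|\underline{A}|$, so Corollary~\ref{cor:oriented} gives integrability, and the bounds obtained below are uniform in $\eps>0$, allowing the limit $\eps\downarrow 0$ at the end.

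Following Enriquez--Sabot~\cite{EnriquezSabot06}, the core computation uses the associativity and restriction properties of the Dirichlet distribution recalled in Section~\ref{sec:dir} to identify the annealed law weighted by $G^\omega_U(0,x)$ with the quenched law in a modified environment. One obtains a representation of the form
\begin{equation*}
\hat{\omega}_U(x,x+e)=\frac{\alpha_e+m_U(x,e)}{\Sigma+n_U(x)},
\end{equation*}
where the nonnegative quantities $m_U(x,e)$, $n_U(x)=\sum_{|e|=1}m_U(x,e)$ encode expected edge-traversal counts of the Kalikow chain at $x$. Since the $m_U(x,\cdot)$ are the values at $x$ of a flow whose divergence is $\delta_0$ minus the exit density from $U$ (hence a signed measure of total variation at most one), summing the identity above against $e\in\{\pm e_i:i=1,\dots,d\}$ gives
\begin{equation*}
\hat{d}_U(x)=\frac{\sum_{i=1}^d(\alpha_i-\alpha_{-i})\,e_i+\vec{R}_U(x)}{\Sigma+n_U(x)},\qquad |\vec{R}_U(x)|_1\leq 1.
\end{equation*}

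The trick that turns the $\ell^\infty$ criterion of \cite{EnriquezSabot06} into an $\ell^1$ one is to choose for $\ell$ the corner direction $\ell_i=\mathrm{sign}(\alpha_i-\alpha_{-i})$ with $\|\ell\|_\infty=1$, rather than a coordinate vector; this produces $\sum_i(\alpha_i-\alpha_{-i})\ell_i=\sum_i|\alpha_i-\alpha_{-i}|$ and, using $|\vec{R}_U(x)\cdot\ell|\leq|\vec{R}_U(x)|_1\,\|\ell\|_\infty\leq 1$, gives $\hat{d}_U(x)\cdot\ell\geq(\sum_i|\alpha_i-\alpha_{-i}|-1)/(\Sigma+n_U(x))>0$ under the hypothesis, uniformly in $x$ and $U$. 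Kalikow's criterion is thus verified and yields the existence of $v\neq 0$. The quantitative bound comes from reading the displayed identity coordinatewise: averaging against the invariant measure of the Kalikow chain and passing to the limit $U\uparrow\Z^d$ normalizes the denominator to $\Sigma-1$, while the $\ell^1$-bound on $\vec{R}_U$ produces exactly $|v-\tfrac{\Sigma}{\Sigma-1}d_m|_1\leq\tfrac{1}{\Sigma-1}$. The main obstacle---and what forces the use of Theorem~\ref{thm:main} beyond \cite{EnriquezSabot06}---is the preliminary step of making $\hat{\omega}_U$ meaningful in full generality: without the new integrability characterization, one cannot identify the strong traps (strongly connected $A$ with $\beta_A\leq 1$) and prove that the killing-weight regularization neutralizes them while preserving the drift bound in the limit $\eps\downarrow 0$.
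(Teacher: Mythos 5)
Your skeleton (Kalikow's criterion via Sznitman--Zerner, an Enriquez--Sabot-type identity for the Kalikow transition probabilities, and an $\ell^1$ separation argument for the drift) is the right one, and the $\ell^1$ idea itself matches the paper's improvement. But two steps do not hold up. First, the regularization: attaching to each vertex an extra edge to $\partial$ of Dirichlet weight $\eps$ increases $\beta_A$ only by $\eps\,|\underline{A}|$, so Corollary~\ref{cor:oriented} only yields finiteness of moments of order $s<\beta_A+\eps|\underline{A}|$; it does \emph{not} give $\bE[G^\omega_U(0,x)]<\infty$ for small $\eps$ when the original graph has a trap with $\beta_A\leq 1$. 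Hence $\widehat{\omega}_U$ is still undefined on the augmented graph and there is nothing to pass to the limit $\eps\downarrow 0$. The paper avoids this entirely: it works with the $\delta$-killed Green function $G^\omega_{U,\delta}$, which is deterministically bounded by $1/(1-\delta)$, so the generalized Kalikow walk always makes sense; genuine integrability is needed only to let $\delta\to1$ by monotone convergence, and it is automatic from Theorem~\ref{thm:zd} because the hypothesis forces $\Sigma\geq\sum_i|\alpha_i-\alpha_{-i}|>1$, hence $2\Sigma-\alpha_i-\alpha_{-i}\geq\Sigma>1$ for every $i$ --- no identification or neutralization of traps is required.

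Second, the representation you postulate, $\widehat{\omega}_U(x,x+e)=\bigl(\alpha_e+m_U(x,e)\bigr)/\bigl(\Sigma+n_U(x)\bigr)$, is neither proved nor the identity that drives the argument. The integration-by-parts identity of~\cite{EnriquezSabot06} gives $\widehat{\omega}_{U,z_0,\delta}(z,z+e)=\frac{1}{\Sigma-1}\bigl(\alpha_e-q_\delta(z,e)\bigr)$ with $(q_\delta(z,e))_{e\in\mathcal{V}}$ a probability vector, so the drift is exactly $\frac{1}{\Sigma-1}(\Sigma d_m-\widetilde{d})$ with $|\widetilde{d}|_1\leq1$: the denominator is the \emph{constant} $\Sigma-1$. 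In your form the denominator $\Sigma+n_U(x)$ is not uniformly bounded in $U$ and $x$, so your lower bound does not verify Kalikow's criterion (which needs a uniform $\inf_{U,x}\widehat{d}_U(x)\cdot\ell>0$), and the quantitative bound $\bigl|v-\frac{\Sigma}{\Sigma-1}d_m\bigr|_1\leq\frac{1}{\Sigma-1}$ cannot be read off; the claim that averaging against the invariant measure ``normalizes the denominator to $\Sigma-1$'' is unsubstantiated, as is the flow/divergence bound $|\vec{R}_U(x)|_1\leq1$. Finally, even with the correct drift identity, passing from a bound on the Kalikow drifts to the stated bound on $v$ uses Proposition 3.2 of~\cite{Sabot04} ($v$ is an accumulation point of the convex hull of the generalized Kalikow drifts as $\delta\to1$), a step missing from your sketch.
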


\section{Proof of the main result} \label{sec:proof}

Let us first give a few words on the proof of theorem~\ref{thm:main}. Proving this integrability condition amounts to bound the tail probability $\bP(G^\omega(o,o)>t)$ from below and above. 

In order to get the lower bound, we exhibit an event consisting of environments preventing the walk from exiting easily from a given subset; this forces the mean exit time of this subset to be large. However, getting a large number of returns to the starting vertex $o$ requires an additional trick: one needs to control from below the probability of some paths leading back to $o$. The key fact here is the basic remark that at, at each vertex, there is at least one exiting edge whose transition probability is greater than the inverse number of neighbours of that vertex. By restricting the probability space to an event where, at each vertex, this (random) edge is fixed, we thus compensate for the non uniform ellipticity of $\bP$ and get the required bound. 

The upper bound is more elaborate. First, using a method based on the above mentioned key fact, we define a random subset $C(\omega)$ of edges such that $o\in\underline{C(\omega)}$, either $o$ or $\partial$ belongs to $\overline{C(\omega)}$, and the random walk can easily connect points through $C(\omega)$ : there is a positive constant $c$ (depending only on $G$) such that, for all distinct points $x\in\underline{C(\omega)}$ and $y\in\overline{C(\omega)}$, we have \begin{equation}P_{x,\omega}(H_y<\widetilde{H}_x\wedge T_{C(\omega)})>c.\label{eqn:minor}\end{equation} 
Note that if $\partial\in\overline{C(\omega)}$, then $G^\omega(o,o)=1/P_{o,\omega}(H_\partial<\widetilde{H}_o)\leq 1/c$ and the desired tail probability is trivial. Suppose now to the contrary that $o\in\overline{C(\omega)}$. Bound~(\ref{eqn:minor}) shows that a visit to any point of $\underline{C(\omega)}$ is likely to be followed up with a visit to $o$. Hence $G^\omega(o,o)$ is on the order of the average total time spent in $C(\omega)$ before $H_\partial$. This total time decomposes into the sum of the time spent in $C(\omega)$ between excursions out of $C(\omega)$. On the other hand, bound~(\ref{eqn:minor}) shows as well that the average exit time out of $C(\omega)$ (from any vertex of $\underline{C(\omega)}$) is on the order of $1/\Sigma$ where $\Sigma = \sum_{e\in\partial_E C(\omega)}\omega_e$. It should then be no surprise that $G^\omega(o,o)$ can be bounded from above by $G^{\widetilde{\omega}}(\ot,\ot)/\Sigma$ (up to a constant factor), where we introduced the quotient graph $\widetilde{G}$ of $G$ obtained by contracting the edges of $C(\omega)$ to a new vertex $\widetilde{o}$, and $\widetilde{\omega}$ is a suitable environment on $\widetilde{G}$. This inequality almost reduces the problem to a smaller graph. Using properties of Dirichlet distributions, we are able to replace $\widetilde{\omega}$ by a Dirichlet environment (lemma~\ref{lem:restriction}) and finally to carry out the induction argument. Note that the proof below is written in terms of the probability $P_{o,\omega}(H_\partial<\widetilde{H}_o)$, which is just $1/G^\omega(o,o)$. 

\subsection*{First implication (lower bound)}

We suppose there exists a strongly connected subset $A$ of $E$ such that $o\in\underline{A}$ and $\beta_A\leq 1$. We shall prove the stronger statement that $\bE[G^\omega_A(o,o)]=\infty$ where $G^\omega_A$ is the Green function of the random walk in the environment $\omega$ killed when exiting $A$. 

Let $\eps>0$. Define the event $\mathcal{E}_\eps=\{\forall e\in\partial_E A,\ \sum_{e\in\partial_E A,\ \underline{e}=x} \omega_e\leq\eps\}$. On $\mathcal{E}_\eps$, one has: 
$$E_{o,\omega}[T_A]\geq\frac{1}{\eps}.$$
Indeed, by Markov property, for all $n\in\mathbb{N}^*$, 
$$P_{o,\omega}(T_A> n)= E_{o,\omega}[T_A> n-1, P_{X_{n-1},\omega}(T_A>1)]\geq P_{o,\omega}(T_A>n-1)\min_{x\in\overline{A}} P_{x,\omega}(T_A> 1) $$
and, on $\Er_\eps$, for all $x\in\overline{A}=\underline{A}$, $P_{x,\omega}(T_A> 1)\geq 1-\eps$, hence: 
$$P_{o,\omega}(T_A> n)\geq P_{o,\omega}(T_A> n-1)(1-\eps)\geq\cdots\geq P_{o,\omega}(T_A>0)(1-\eps)^n=(1-\eps)^n.$$
Therefore, $$E_o[T_A]=\bE[E_{o,\omega}[T_A]]=\int_0^\infty \bP(E_{o,\omega}[T_A]\geq t) dt\geq \int_0^\infty \bP(\Er_{1/t})dt.$$
Finally, we have $\bP(\Er_\eps)\sim_{\eps\to0}C\eps^{\beta_A}$ where $C$ is a positive constant (as can be seen from definition~($*$) of Dirichlet distribution), and $\beta_A\leq1$, so that $E_o[T_A]=\infty$. As $T_A=\sum_{x\in\underline{A}}N_{A,x}$, where $N_{A,x}$ is the number of visits at $x$ before $T_A$, there exists a vertex $x\in\underline{A}$ such that $\infty=E_o[N_{A,x}]=\bE[G^\omega_A(o,x)]$. The inequality $G^\omega_A(o,x)\leq G^\omega_A(x,x)$ then yields $\bE[G^\omega_A(x,x)]=\infty$. 

In order to get the result on $G^\omega_A(o,o)$, we have to refine this proof by considering an event where there is a path from $x$ to $o$ whose transition probability is uniformly bounded from below. 

To $\bP$-almost every environment $\omega$, one can associate the subset of edges $\vec{G}(\omega)$ containing only one edge $e$ exiting from every vertex $\neq\partial$, namely the one maximizing $\omega(e)$. Then, if $e\in\vec{G}(\omega)$: 
$$\omega_e\geq\frac{1}{n_{\underline{e}}},$$
where $n_x$ is the number of neighbours of $x\in V$. In particular, there is a positive constant $\kappa$ depending only on $G$ such that, if $x$ is connected to $y$ through a (simple) path $\pi$ in $\vec{G}(\omega)$ then $P_{x,\omega}(\pi)\geq\kappa$. 

The strongly connected subset $A$ of $E$ possesses at least one spanning tree $T$ oriented towards $o$. Let us denote by $\mathcal{F}$ the event $\{\vec{G}(\omega)=T\}$ (for $\omega\in\mathcal{F}$, every vertex of $\underline{A}$ is then connected to $o$ in $\vec{G}(\omega)$). One still has $\bP(\Er_\eps\cap\mathcal{F})\geq\bP(\Er_\eps\cap\{\forall e\in T,\omega_e>1/2\})\sim_{\eps\to0} C\eps^{\beta_A}$, where $C$ is a positive constant. Then, like previously, because $\beta_A\leq 1$: 
$$\bE[E_{o,\omega}[T_A],\mathcal{F}]=\int_0^\infty\bP(E_{o,\omega}[T_A]\geq t,\mathcal{F})dt\geq\int_0^\infty\bP(\Er_{1/t}\cap\mathcal{F})dt=+\infty,$$
and subsequently there exists $x\in\underline{A}$ such that $\bE[G^\omega_A(o,x),\mathcal{F}]=\infty$, hence $\bE[G^\omega_A(x,x),\mathcal{F}]=\infty$. Now, there is an integer $l$ and a real number $\kappa>0$ such that, if $\omega\in\mathcal{F}$, $P_{x,\omega}(X_l=o)\geq\kappa$, which implies, on $\mathcal{F}$, thanks to Markov property: 
$$G^\omega_A(x,x)\leq\frac{1}{\kappa} G^\omega_A(x,o)\leq\frac{1}{\kappa} G^\omega_A(o,o).$$
(indeed, $G^\omega_A(x,x)=\sum_{k\geq0} P_{x,\omega}(X_k=x,H_\partial>k)\leq\sum_{k\geq0}\frac{1}{\kappa}P_{x,\omega}(X_{k+l}=o,H_\partial>k+l)\leq\frac{1}{\kappa}G^\omega_A(x,o)$) Therefore we get: 
$$\bE[G^\omega_A(x,x),\mathcal{F}]\leq\frac{1}{\kappa}\bE[G^\omega_A(o,o),\mathcal{F}]\leq\frac{1}{\kappa}\bE[G^\omega_A(o,o)],$$
and finally $\bE[G^\omega_A(o,o)]=\infty$. 

\subsection*{Converse implication (upper bound)}

The proof of the other implication procedes by induction on the number of edges of the graph, through quotienting by an appropriate subset of edges. 

\begin{definition}
If $A$ is a strongly connected subset of edges of a graph $G=(V,E,\mathit{head},\mathit{tail})$, the quotient graph of $G$ obtained by contracting $A$ to $\widetilde{a}$ is the graph $\widetilde{G}$ deduced from $G$ by deleting the edges of $A$, replacing all the vertices of $\underline{A}$ by one new vertex $\widetilde{a}$, and modifying the endpoints of the edges of $E\setminus A$ accordingly. Thus the set of edges of $\widetilde{G}$ is naturally in bijection with $E\setminus A$ and can be thought of as a subset of $E$. 
\end{definition}

In other words, $\widetilde{G}=(\widetilde{V},\widetilde{E},\widetilde{\mathit{head}},\widetilde{\mathit{tail}})$ where $\widetilde{V}=(V\setminus\underline{A})\cup\{\widetilde{a}\}$ ($\widetilde{a}$ being a new vertex), $\widetilde{E}=E\setminus A$ and, if $\pi$ denotes the projection from $V$ to $\widetilde{V}$ (i.e. $\pi_{|V\setminus\underline{A}}=\mathrm{id}$ and $\pi(x)=\widetilde{a}$ if $x\in\underline{A}$), $\widetilde{\mathit{head}} = \pi\circ\mathit{head}$ and $\widetilde{\mathit{tail}} = \pi\circ\mathit{tail}$ on $\widetilde{E}=E\setminus A$. Notice that this quotient may well introduce multiple edges. 

Let us first describe the construction of the subset $C(\omega)$ about to play the role of $A$ in the definition. This subset of "easily visited edges" extends the idea underlying the definition of $\vec{G}(\omega)$ in the previous proof. 

We define inductively a finite sequence $e_1=(x_1,y_1),\ldots,e_n=(x_n,y_n)$ of edges in the following way: letting $y_0=o$, if $e_1,\ldots, e_{k-1}$ have been defined, then $e_k$ is the edge in $E$ which maximizes the exit distribution out of $C_k=\{e_1,\ldots,e_{k-1}\}$ starting at $y_{k-1}$ :
$$e\mapsto P_{y_{k-1},\omega}((X_{T_{C_k}-1},X_{T_{C_k}})=e),$$ 
and $n$ is the least index $\geq 1$ such that $y_n\in\{o,\partial\}$. In words, the edge $e_k$ is, among the edges exiting the set $C_k(\omega)$ of already visited edges, the one maximizing the probability for a random walk starting at $y_{k-1}$ to exit $C_k(\omega)$ through it; and the construction ends as soon as an edge $e_k$ heads at $o$ or $\partial$. Notice that $C_1=\varnothing$, hence $T_{C_1}=1$, and more generally if $y_{k-1}\notin\{x_1,\ldots,x_{k-1}\}$, then $e_k$ maximizes in fact $e\mapsto\omega_e$ among the edges exiting $y_{k-1}$: $e_k$ is the edge of $\vec{G}(\omega)$ originating at $y_{k-1}$. The assumption that each vertex is connected to $\partial$ guarantees the existence of an exit edge out of $C_k(\omega)$ for $k\leq n$, and the finiteness of $G$ ensures that $n$ exists: the procedure ends. We set: $$C(\omega)=C_{n+1}=\{e_1,\ldots,e_n\}.$$ Note that the maximizing edges, and thus $C(\omega)$, are well defined up to a zero Lebesgue measure set. 

The support of the distribution of $\omega\mapsto C(\omega)$ writes as a disjoint union $\mathcal{C}=\mathcal{C}_o\cup\mathcal{C}_\partial$ depending on whether $o$ or $\partial$ belongs to $\overline{C(\omega)}$. For any $C\in\mathcal{C}$, we let $\Er_C$ be the event $\{C(\omega)=C\}$. On such an event, we can get uniform lower bounds on some probabilities, as if a uniform ellipticity property held: 

\begin{prop}\label{prop:onC} There exists a constant $c>0$ such that, for all $C\in\mathcal{C}$ , for all $x\in\overline{C}\setminus\{o\}$, for all $\omega\in\Er_C$, $$P_{o,\omega}(H_x<\Ht_o\wedge T_C)\geq c.$$
\end{prop}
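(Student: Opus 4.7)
The plan is to prove by induction on $k\in\{1,\ldots,n\}$ the lower bound
\[
P_{o,\omega}(H_{y_k}<\Ht_o\wedge T_C)\geq |E|^{-k},
\]
valid for every $\omega\in\Er_C$. Since every $x\in\overline{C}\setminus\{o\}$ equals $y_k$ for some such $k$, and since the edges $e_1,\ldots,e_n$ are pairwise distinct (each $e_k$ being chosen outside $C_k$), the bound $n\leq|E|$ yields the proposition with $c=|E|^{-|E|}$, which depends only on $G$ as required.

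The key combinatorial observation underpinning the induction is the following: for every $k\leq n$, the edges of $C_k$ have heads among $y_1,\ldots,y_{k-1}$, none of which coincides with $o$ (since $y_j\neq o$ for every $j<n$ by the stopping rule in the construction). Consequently, any trajectory starting at $y_{k-1}\neq o$ and following only edges of $C_k$ stays in $\{y_1,\ldots,y_{k-1}\}$ and never visits $o$. Combined with the greedy choice of $e_k$---which, as the maximizer of an exit distribution supported on at most $|E|$ edges, satisfies $P_{y_{k-1},\omega}((X_{T_{C_k}-1},X_{T_{C_k}})=e_k)\geq 1/|E|$---this will show that on the event ``exit $C_k$ via $e_k$'' the walk reaches $y_k\neq o$ at time $T_{C_k}$, using only edges of $C_k\cup\{e_k\}\subseteq C$ and without ever touching $o$. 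We will thus obtain the one-step bound
\[
P_{y_{k-1},\omega}(H_{y_k}<H_o\wedge T_C)\geq\frac{1}{|E|}
\]
for $k\geq 2$; the case $k=1$ is direct, since $C_1=\varnothing$ forces $\omega_{e_1}\geq 1/n_o\geq 1/|E|$.

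The inductive step will then follow from the strong Markov property at $H_{y_{k-1}}$: on $\{H_{y_{k-1}}<\Ht_o\wedge T_C\}$ the walk sits at $y_{k-1}$ having neither returned to $o$ nor exited $C$, so shifting time by $H_{y_{k-1}}$ identifies $\Ht_o$ with $H_o$ of the shifted walk (and likewise for $T_C$); hence
\[
P_{o,\omega}(H_{y_k}<\Ht_o\wedge T_C)\geq P_{o,\omega}(H_{y_{k-1}}<\Ht_o\wedge T_C)\cdot P_{y_{k-1},\omega}(H_{y_k}<H_o\wedge T_C),
\]
and the recursion closes. I do not anticipate any serious obstacle: the real content of the argument is the combinatorial observation that the greedy construction prevents $o$ from appearing as a head of any edge of $C_k$, which turns the potentially delicate ``avoid $o$'' constraint into an automatic consequence of exiting via $e_k$. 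A minor point to verify is the a.s.\ finiteness of $T_{C_k}$ starting from $y_{k-1}$, which is guaranteed by assumption (ii) that every vertex is connected to $\partial$.
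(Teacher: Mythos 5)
Your proposal is correct and follows essentially the same route as the paper: the greedy maximizer gives the exit-through-$e_k$ probability a lower bound $1/|E|$, the fact that $y_j\neq o$ for $j<n$ (equivalently, that no edge of $C_k$ heads at $o$) makes the ``avoid $o$'' constraint automatic, and chaining by the Markov property with $n\leq|E|$ yields $c=|E|^{-|E|}$, exactly the paper's $\kappa^{|E|}$. Your write-up merely spells out more explicitly the combinatorial reason the walk cannot touch $o$ before exiting $C_k$, which the paper leaves implicit.
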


\begin{proof}
Let $\omega\in\Er_C$. For $k=1,\ldots,n$, due to the choice of $e_k$ as a maximizer over $E$ (or $\partial_E C_k$), we have:
$$P_{y_{k-1},\omega}((X_{T_{C_k}-1},X_{T_{C_k}})=e_k)\geq\kappa=\frac{1}{|E|}.$$
As $y_k\neq o$ as soon as $k<n$, we deduce, for such $k$, and for $k=n$ if $y_n=\partial$: 
$$P_{y_{k-1},\omega}(H_{y_k}<\widetilde{H_o}\wedge T_C)\geq P_{y_{k-1},\omega}(X_{T_{C_k}}=y_k)\geq\kappa.$$
Then, by Markov property, for any $x\in\overline{C}=\{y_1,\ldots,y_n\}$, if $x\neq o$, 
$$P_{o,\omega}(H_x< \widetilde{H_o}\wedge T_C)\geq \kappa^n\geq c = \kappa^{|E|}.$$
\end{proof}

Let us now prove the upper bound itself. We prove the following property by induction on $n\geq 1$: 

\noindent\textsc{Induction hypothesis} (at rank $n$) - \begin{sffamily}
For every directed graph $G=(V\cup\{\partial\},E)$ possessing at most $n$ edges (and such that every vertex is connected to $\partial$), for every parameter family $(\alpha_e)_{e\in E}$ and every vertex $o\in V$, there exist real numbers $C,r>0$ such that, for small $\eps>0$, 
$$\bP(\qP{o}(H_\partial<\Ht_o)\leq\eps)\leq C\eps^\beta(-\ln\eps)^r,$$
where $\beta=\min \ensemble{\beta_A}{A \mbox{ is a strongly connected subset of $E$ and }o\in\underline{A}}$. 
\end{sffamily}

Initialization: if $|E|=1$ (and $o$ is connected to $\partial$), the only edge links $o$ to $\partial$, so that $\qP{o}(H_\partial<\Ht_o)=1$, and the property is true (with any positive $\beta$). 

Let $n\geq 1$. We suppose the induction hypothesis to be true at rank $n$. Let $G=(V\cup\{\partial\},E)$ be a graph with $n+1$ edges (such that every vertex is connected to $\partial$), let $o\in V$, and $(\alpha_e)_{e\in E}$ be positive real numbers. We apply to $G$ the construction of $\omega\mapsto C(\omega)$ described before and use the notations thereof. 

Because of the finiteness of $\mathcal{C}(\subset\mathcal{P}(E))$, it will be sufficient to prove the upper bound separately on each of the events $\Er_C$. Let $C\in\mathcal{C}$. 

If $C\in\mathcal{C}_\partial$, then $\partial\in \overline{C}$, and the proposition above provides $c>0$ such that, on $\Er_C$, $P_{o,\omega}(H_\partial<\Ht_o)\geq c$ hence, for small $\eps>0$, $$\bP(\qP{o}(H_\partial<\Ht_o)\leq\eps,\ \Er_C)=0.$$

Therefore we may assume $C\in\mathcal{C}_o$. Then $C$ is a strongly connected subset of $E$ (due to the construction method, for every pair of vertices of $\overline{C}$, the first encountered one can be connected  in $C$ to the second one; and $o$ is encountered both first and last), and proposition~\ref{prop:onC} provides a constant $c>0$ such that for any $x\in \overline{C}\setminus\{o\}$ and $\omega\in\Er_C$, $$P_{o,\omega}(H_x<H_\partial\wedge \Ht_o)\geq c.$$

We consider the quotient graph $\widetilde{G}=(\widetilde{V}\!=\!(V\setminus\underline{C})\cup\{\partial,\ot\},\widetilde{E}\!=\!E\setminus C,\widetilde{\mathit{head}},\widetilde{\mathit{tail}})$ obtained by contracting $C$ to a new vertex $\ot$. Because $\widetilde{E}$ is a subset of $E$, we may endow the set $\widetilde{\Omega}$ of environments on $\widetilde{G}$ with the Dirichlet distribution of parameter $(\alpha_e)_{e\in\widetilde{E}}$, again denoted by $\bP$. We may as well introduce another distribution, namely the law under $\bP$ of the \emph{quotient environment} $\widetilde{\omega}$ defined the following way: for every edge $e\in\widetilde{E}$, if $e\notin\partial_E C$ (i.e. if $\widetilde{\mathit{tail}}(e)\neq\ot$), then $\widetilde{\omega}_e=\omega_e$, where $\omega$ is the canonical random variable on $\widetilde{\Omega}$, and if $e\in\partial_E C$, then: $$\widetilde{\omega}_e=\frac{\omega_e}{\Sigma},$$
where $\Sigma=\sum_{e\in\partial_E C}\omega_e$. In the following, we shall sometimes write an exponent $G$ or $\widetilde{G}$ on the probability $\qP{x}$ to indicate which graph we consider. 

The edges in $C$ do not appear in $\widetilde{G}$ anymore. In particular, $\widetilde{G}$ has strictly less than $n$ edges. In order to apply induction with respect to the point $\ot$, it suffices to check that each vertex is connected to $\partial$, which results easily from the similar property for $G$. As the induction hypothesis applies to graphs with simple edges, we denote by $\widetilde{\beta}$ the exponent "$\beta$" in the induction hypothesis corresponding now to the graph $\widetilde{G}$ once its multiple edges have been simplified (see page~\pageref{par:simplification}) and to $\ot$. Then, using the induction hypothesis, we have, for small $\eps>0$: 
\begin{equation}\label{eqn:induc}
\bP(P^{\widetilde{G}}_{\ot,\omega}(H_\partial<\Ht_\ot)\leq\eps)\leq C\eps^{\widetilde{\beta}}(-\ln\eps)^r,
\end{equation}
where $C,r>0$, and the left-hand side may equivalently refer to the graph $\widetilde{G}$ or to its simple-edged version, as explained earlier. 

We need to come back from $\widetilde{G}$ to $G$. First it will be easier to do so with the environment $\widetilde{\omega}$. Notice that, from $o$, one way for the walk to reach $\partial$ without coming back to $o$ consists in exiting $C$ without coming back to $o$ and then reaching $\partial$ without coming back to $\underline{C}$. Thus we have, $\bP^{(G)}$-a.s. on $\mathcal{E}_C$: 
\eqd
P_{o,\omega}(H_\partial<\Ht_\ot)
	& \geq & \sum_{x\in\underline{C}} P_{o,\omega}(H_x<\Ht_o\wedge T_C, H_\partial< H_x+\Ht_{\underline{C}}\circ\Theta_{H_x})\\
	& = & \sum_{x\in\underline{C}} P_{o,\omega}(H_x<\Ht_o\wedge T_C)P_{x,\omega}(H_\partial<\Ht_{\underline{C}})\\
	& \geq & c \sum_{x\in\underline{C}} P_{x,\omega}(H_\partial<\Ht_{\underline{C}})\\
	& = & c\Sigma\cdot P_{\ot,\widetilde{\omega}}(H_\partial<\Ht_\ot)
\eqf
where the last equality comes from the definition of the quotient : both quantities correspond to the same set of paths viewed in $G$ and in $\widetilde{G}$, and, for all $x\in\underline{C}$, $P_{x,\omega}$-almost every path belonging to the event $\{H_\partial<\Ht_{\underline{C}}\}$ contains exactly one edge exiting from $\underline{C}$ so that the renormalization by $\Sigma$ appears exactly once when considering $\widetilde{\omega}$. 

Thus, for some $c'>0$, we have:
$$\bP(P^G_{o,\omega}(H_\partial<\Ht_o)\leq\eps,\ \mathcal{E}_C)\leq\bP(\Sigma\cdot P^{\widetilde{G}}_{\ot,\widetilde{\omega}}(H_\partial<\Ht_\ot)\leq c'\eps,\ \mathcal{E}_C).$$

Remark that $\widetilde{\omega}$ does not follow a Dirichlet distribution because of the renormalization. We can however reduce to the Dirichlet situation and thus procede to induction. This is the aim of the following lemma, inspired by the restriction property of section~\ref{sec:dir}. For readibility and tractability reasons, we only state and prove it in the case of two Dirichlet random variables, though the generalization is fairly straightforward: 

\begin{lemme}\label{lem:restriction}
Let $(p_1,\ldots,p_{k+1})$ and $(p'_1,\ldots,p'_{l+1})$ be random variables following, under the probability $P$, Dirichlet laws of respective parameters $\vec{\alpha}$ and $\vec{\alpha}'$. We set $\Sigma=p_1+\ldots+p_k+p'_1+\ldots+p'_l$. Then there exists $C>0$ such that, for every positive measurable function $f:\R^{k+l+1}\to\R$, 
$$E\left[f\left(\Sigma,\frac{p_1}{\Sigma},\ldots,\frac{p_k}{\Sigma},\frac{p'_1}{\Sigma},\ldots,\frac{p'_l}{\Sigma}\right)\right]
	\leq C\cdot \widetilde{E}\left[f(\widetilde{\Sigma},\widetilde{p}_1,\ldots,\widetilde{p}_k,\widetilde{p'}_1,\ldots,\widetilde{p'}_l)\right],$$
where, under the probability $\widetilde{P}$, $(\widetilde{p}_1,\ldots,\widetilde{p}_k,\widetilde{p'}_1,\ldots,\widetilde{p'}_l)$ is sampled from a Dirichlet distribution of parameter $(\alpha_1,\ldots,\alpha_k,\alpha'_1,\ldots,\alpha'_l)$, $\widetilde{\Sigma}$ is bounded and satisfies $\widetilde{P}(\widetilde{\Sigma}<\eps)\leq C'\eps^{\alpha_1+\cdots+\alpha_k+\alpha'_1+\cdots+\alpha'_l}$ for every $\eps>0$, and these two variables are independent.  
\end{lemme}

\begin{proof}
We set $\beta=\alpha_{k+1}$ and $\beta'=\alpha'_{l+1}$. 
Writing the index $(\cdot)_i$ instead of $(\cdot)_{1\leq i\leq k}$ and the same way with $j$ and $l$, the left-hand side of the statement equals: 
$$\int_{\scriptsize\begin{array}{c}\{\sum\limits_i x_i\leq 1,\\ \sum\limits_j y_j\leq 1\}\end{array}}f\left(\sum\limits_i x_i+\sum\limits_j y_j,\left(\frac{x_i}{\sum_i x_i+\sum_j y_j}\right)_i,\left(\frac{y_j}{\sum_i x_i+\sum_j y_j}\right)_j\right) \phi((x_i)_i,(y_j)_j)\, \prod_i {dx_i}\prod_j{dy_j},$$
where for some positive $c_0$, $\phi((x_i)_i,(y_j)_j)=c_0\left(\prod\limits_i x_i^{\alpha_i-1}\right)(1-\sum\limits_i x_i)^\beta\left(\prod\limits_j y_j^{\alpha'_j-1}\right)(1-\sum\limits_j y_j)^{\beta'}$. We successively procede to the following changes of variable : $x_1\mapsto u=\sum_i x_i +\sum_j y_j$, then $x_i\mapsto \widetilde{x}_i=\frac{x_i}{u}$ for every $i\neq 1$, and $y_j\mapsto \widetilde{y}_j=\frac{y_j}{u}$ for every $j$. The previous integral becomes : 
$$\int_{\scriptsize\left\{\begin{array}{c}\sum_{i\neq 1} \widetilde{x}_i+\sum_j \widetilde{y}_j\leq 1,\\1-\frac{1}{u}\leq\sum_j \widetilde{y}_j\leq\frac{1}{u}\end{array}\right\}}f\left(u,1-\sum_{i\neq 1} \widetilde{x}_i + \sum_j \widetilde{y}_j,\left(\widetilde{x}_i\right)_{i\neq 1},\left(\widetilde{y}_j\right)\right) \psi(u,(\widetilde{x}_i)_{i\neq1},(\widetilde{y}_j)_j)\,du\prod_{i\neq 1} {d\widetilde{x}_i}\prod_j{d\widetilde{y}_j},$$
where : $$\textstyle\psi(u,(x_i)_{i\neq1},(y_j)_j)=c_0 u^{\sum\limits_i \alpha_i+\sum\limits_j\alpha'_j-1}(1-\sum\limits_{i\neq1}\widetilde{x}_i-\sum\limits_j \widetilde{y}_j)^{\alpha_1-1} \prod\limits_{i\neq1} \widetilde{x}_i^{\alpha_i-1}(1-u(1-\sum\limits_j \widetilde{y}_j))^\beta \prod\limits_j \widetilde{y}_j^{\alpha'_j-1}(1-u \sum\limits_j \widetilde{y}_j)^{\beta'}.$$
Bounding from above by 1 the last two factors of $\psi$ where $u$ appears, we get that the last quantity is less than:
$$ \int_{\scriptsize\left\{\begin{array}{c}\sum_{i\neq 1} \widetilde{x}_i+\sum_j \widetilde{y}_j\leq 1,\\u\leq 2\end{array}\right\}}f\left(u,1-\sum_{i\neq 1} \widetilde{x}_i + \sum_j \widetilde{y}_j,\left(\widetilde{x}_i\right)_{i\neq 1},\left(\widetilde{y}_j\right)\right)\theta(u,(\widetilde{x}_i)_{i\neq 1},(y_j)_j)\,du\prod_{i\neq 1} {d\widetilde{x}_i}\prod_j{d\widetilde{y}_j},$$
where $\theta(u,(\widetilde{x}_i)_{i\neq 1},(y_j)_j)=c_0 \left(1-\sum\limits_{i\neq1}\widetilde{x}_i-\sum\limits_j \widetilde{y}_j\right)^{\alpha_1-1} \prod\limits_{i\neq1} \widetilde{x}_i^{\alpha_i-1}\prod\limits_j \widetilde{y}_j^{\alpha'_j-1}$. 

This rewrites, for some positive $c_1$, as : 
$c_1 \widetilde{E}\left[f(\widetilde{\Sigma},\widetilde{p}_1,\ldots,\widetilde{p}_k,\widetilde{p'}_1,\ldots,\widetilde{p'}_l)\right],$ with the notations of the statement (we have here $\widetilde{P}(\widetilde{\Sigma}<\eps)=c\int_0^\eps u^{\sum_i \alpha_i+\sum_j\alpha'_j-1}du= c' \eps^{\sum_i\alpha_i+\sum_j\alpha'_j}$). 
\end{proof}

Using the inequality before the lemma we get:
\eqd
\bP(\qP{o}(H_\partial<\Ht_o)\leq\eps,\ \mathcal{E}_C)
	& \leq & \bP(\Sigma\cdot P_{\ot,\widetilde{\omega}}(H_\partial<\Ht_\ot)\leq c'\eps,\ \mathcal{E}_C)\\
	& \leq & \bP(\Sigma\cdot P_{\ot,\widetilde{\omega}}(H_\partial<\Ht_\ot)\leq c'\eps)\\
	& \leq & C'\bP(\widetilde{\Sigma}\cdot P_{\ot,\omega}(H_\partial<\Ht_\ot)\leq c'\eps),
\eqf
where, under $\bP$, $\widetilde{\Sigma}$ is a positive bounded random variable independent of $\omega$ such that $\bP(\widetilde{\Sigma}\leq\eps)\leq c\eps^{\beta_{C}}$ for all $\eps>0$. The next result will be useful to conclude: 

\begin{lemme}
If $X$ and $Y$ are independent positive bounded random variables satisfying, for some real numbers $\alpha_X,\alpha_Y,r>0$:
\begin{itemize}
	\item there exists $C>0$ such that $P(X<\eps)\leq C \eps^{\alpha_X}$ for all $\eps>0$ (or equivalently for small $\eps$);
	\item there exists $C'>0$ such that $P(Y<\eps)\leq C' \eps^{\alpha_Y}(-\ln\eps)^r$ for small $\eps>0$,
\end{itemize}
then there exists a constant $C''>0$ such that, for small $\eps>0$: 
$$P(XY\leq \eps)\leq C''\eps^{\alpha_X\wedge\alpha_Y}(-\ln\eps)^{r+1}$$
(and $r+1$ can be replaced by $r$ if $\alpha_X\neq\alpha_Y$). 

\end{lemme}

\begin{proof}
We denote by $M_X$ and $M_Y$ (deterministic) upper bounds of $X$ and $Y$. We have, for $\eps>0$:
$$P(XY\leq\eps) =  P\left(Y\leq\frac{\eps}{M_X}\right)+P\left(XY\leq\eps, Y>\frac{\eps}{M_X}\right).$$
Let $\eps_0>0$ be such that the upper bound in the statement for $Y$ is true as soon as $\eps<\eps_0$. Then, for $0<\eps<\eps_0$:
\eqd
P(XY\leq\eps, Y>\frac{\eps}{M_X})
	& = & \int_{\frac{\eps}{M_X}}^{M_Y} P\left(X\leq\frac{\eps}{y}\right)P(Y\in dy)\\
	& \leq & C \int_{\frac{\eps}{M_X}}^{M_Y} \left(\frac{\eps}{y}\right)^{\alpha_X} P(Y\in dy)\\
	& = & C \eps^{\alpha_X} E\left[{\bf 1}_{(Y\geq{\frac{\eps}{M_X}})}\frac{1}{Y^{\alpha_X}}\right]\\
	& = & C\eps^{\alpha_X} \left(\int_{\frac{\eps}{M_X}}^{M_Y} P({\frac{\eps}{M_X}}\leq Y\leq x)\frac{{\alpha_X} dx}{x^{{\alpha_X}+1}} + \frac{1}{{M_Y}^{\alpha_X}}\right)\\
	& \leq & C\eps^{\alpha_X} \left(\alpha_X C'\int_{\frac{\eps}{M_X}}^{\eps_0} x^{\alpha_Y} (-\ln x)^r \frac{dx}{x^{{\alpha_X}+1}} +C'' \right)\\
	& \leq & C\eps^{\alpha_X} \left(\alpha_X C'\int_{\frac{\eps}{M_X}}^{\eps_0} x^{{\alpha_Y}-{\alpha_X}-1} dx (-\ln{\frac{\eps}{M_X}})^r + C''\right)\\
	& \leq & C''' \eps^{{\alpha_X}\wedge{\alpha_Y}}(-\ln\eps)^{r+1}.
\eqf
Indeed, if $\alpha_Y>\alpha_X$, the integral converges as $\eps\to 0$; if $\alpha_Y=\alpha_X$, it is equivalent to $-\ln \eps$; if $\alpha_Y>\alpha_X$, the equivalent becomes $\frac{1}{\eps^{\alpha_X-\alpha_Y}}$. And the formula is checked in every case (note that $-\ln\eps>1$ for small $\eps$).
\end{proof}

In conclusion, using~(\ref{eqn:induc}), the last lemma and the inequality right before it, we get constants $c,r>0$ such that, for small $\eps>0$:
$$\bP(\qP{o}(H_\partial<\Ht_o)\leq\eps,\mathcal{E}_C)\leq c\eps^{\beta_{C}\wedge\widetilde{\beta}}(-\ln\eps)^{r+1}.$$

Notice that $\widetilde{\beta}\geq\beta$, where $\beta$ is the exponent defined in the induction hypothesis. Indeed: let $\widetilde{A}$ be a strongly connected subset of $\widetilde{E}$ such that $\ot\in\underline{\widetilde{A}}$. Set $A=\widetilde{A}\cup C\subset E$. In view of the definition of $\widetilde{E}$, every edge exiting $\widetilde{A}$ corresponds to an edge exiting $A$ and vice-versa (the only edges to be deleted by the quotienting procedure are those of $C$). Thus, $\beta_{\widetilde{A}}=\beta_A$, $o\in\underline{A}$, and $A$ is strongly connected (because so are $\widetilde{A}$ and $C$, and $\ot\in \widetilde{A}$, $o\in C$). Hence we deduce $\widetilde{\beta}\geq\beta$ as expected. 

Then $\beta_{C}\wedge\widetilde{\beta}\geq\beta_C\wedge\beta=\beta$ because $C$ is strongly connected and $o\in\underline{C}$. This concludes the induction (summing on all events $\mathcal{E}_C$, $C\in\mathcal{C}$). 

The result is then deduced from the induction property using the integrability of $t\mapsto\frac{(\ln t)^r}{t^\beta}$ in the neighbourhood of $+\infty$ as soon as $\beta>1$, and the following Markov chain identity: 
$$G^\omega(o,o)=\frac{1}{\qPo(H_\partial<\widetilde{H}_o)}.$$

\paragraph{Remark:} This proof gives the following more precise result: there exist $c,C,r>0$ such that, for large enough $t$, 
$$c\frac{1}{t^{\min_A\beta_A}}\leq\bP(G^\omega(o,o)>t)\leq C \frac{(\ln t)^r}{t^{\min_A \beta_A}},$$
where the minimum is taken over all strongly connected subsets $A$ of $E$ such that $o\in\underline{A}$. 

\subsection*{Proof of the corollary}

We prove corollary~\ref{cor:oriented}, restated here:

\begin{corollnn}
Let $G=(V\cup\{\partial\},E)$ be a finite oriented strongly connected graph and $(\alpha_e)_{e\in E}$ a family of positive real numbers. For every $s>0$, the following properties are equivalent: 
\begin{enumerate}
	\item for every vertex $x$, $\bE[E_{x,\omega}[T_V]^s]<\infty$; 
	\item for every vertex $x$, $\bE[G^\omega(x,x)^s]<\infty$; 
	\item every non-empty strongly connected subset $A$ of $E$ satisfies $\beta_A>s$;
	\item there is a vertex $x$ such that $\bE[E_{x,\omega}[T_V]^s]<\infty$.
\end{enumerate}
\end{corollnn} 

\begin{proof}
The equivalence of $(i)$ and $(ii)$ results from the inequalities below: for every $\omega\in\Omega$, $x\in V$, $s>0$, 
$$ G^\omega(x,x)^s = E_{x,\omega}[N_x]^s \leq E_{x,\omega}[T_V]^s
	 =  \left(\sum_{y\in V} P_{x,\omega}(H_y<H_\partial)G^\omega(y,y)\right)^s
	 \leq  |V|^s \sum_{y\in V} G^\omega(y,y)^s. $$

Theorem~\ref{thm:main} provides the equivalence of $(ii)$ and $(iii)$. The fact that $(i)$ implies $(iv)$ is trivial. 

Let us suppose that $(iii)$ is not satisfied: there is a strongly connected subset $A$ of $E$ such that $\beta_A\leq1$. Let $o$ be a vertex. If $o\in \underline{A}$, then $E_o[T_V]\geq \bE[G^\omega(o,o)]=\infty$; and if $o\notin \underline{A}$, there exists (thanks to strong connexity) a path $\pi$ from $o$ to some vertex $x\in \underline{A}$ which remains outside $\underline{A}$ (before $x$), and we recall that theorem~\ref{thm:main} proves $\bE[G_A^\omega(x,x)]=\infty$, hence thanks to spatial independence of the environment: 
$$E_o[T_V]\geq \bE[G^\omega(o,x)] \geq \bE[P_{o,\omega}(\pi)G_A^\omega(x,x)]=\bE[P_{o,\omega}(\pi)]\times\bE[G_A^\omega(x,x)]=\infty,$$
so that in both cases ($o\in \underline{A}$, $o\notin \underline{A}$), $E_o[T_V]=\infty$. Thus, $(iv)$ is not true. So $(iv)$ implies $(iii)$, and we are done. 
\end{proof}

\paragraph{Remark:} Under most general hypotheses, $(i)$ and $(ii)$ are still equivalent (same proof). The equivalence of $(i)$ et $(iv)$ can be shown to hold as well in the following general setting:

\begin{prop}
Let $G=(V\cup\{\partial\},E)$ be a finite strongly connected graph endowed with a probability measure $\bP$ on the set of its environments satisfying:
\begin{itemize}
	\item the transition probabilities $\omega(x,\cdot)$, $x\in V$, are independent under $\bP$;
	\item for all $e\in E$, $\bP(\omega_e>0)>0$.
\end{itemize}
If there exists $x\in V$ such that $\aE{x}[T_V]=+\infty$, then for all $y\in V$, $\aE{y}[T_V]=+\infty$. 
\end{prop}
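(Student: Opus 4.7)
The plan is to argue by contradiction. Assume $\aE{y}[T_V]<+\infty$ for some $y\in V$, while by hypothesis $\aE{x}[T_V]=+\infty$. Strong connectivity of $V$ provides a simple path $\pi=(y=u_0,u_1,\ldots,u_k=x)$ in $V$. Let $i^*=\min\{i\in\{0,\ldots,k\}:\aE{u_i}[T_V]=+\infty\}$; the hypothesis gives $i^*\leq k$ and the contradictory assumption forces $i^*\geq 1$, while by minimality $\aE{u_j}[T_V]<+\infty$ for every $j<i^*$. I will deduce $\aE{u_0}[T_V]=\aE{y}[T_V]=+\infty$, contradicting $i^*\geq 1$.

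The first step localizes the divergence on a smaller set. Let $V''=V\setminus\{u_0,\ldots,u_{i^*-1}\}$, which contains $u_{i^*}$. Under $\qP{u_{i^*}}$, the walk leaves $V''$ either at $\partial$ or at some $u_j$ with $j<i^*$, so the strong Markov property at $T_{V''}$ gives
\[
\qE{u_{i^*}}[T_V]=\qE{u_{i^*}}[T_{V''}]+\sum_{j=0}^{i^*-1}\qP{u_{i^*}}\!\left(X_{T_{V''}}=u_j\right)\qE{u_j}[T_V].
\]
Taking $\bE$ and bounding $\qP{u_{i^*}}(\cdot)\leq 1$, each term of the sum is at most $\aE{u_j}[T_V]<+\infty$. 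Since $\aE{u_{i^*}}[T_V]=+\infty$, this forces $\bE[\qE{u_{i^*}}[T_{V''}]]=+\infty$.

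The second and key step exploits spatial independence. The quantity $\qE{u_{i^*}}[T_{V''}]$ depends only on $(\omega(v,\cdot))_{v\in V''}$, because while the walk remains in $V''$ it only uses the transition distributions at those vertices. Since $u_0,\ldots,u_{i^*-1}$ all lie outside $V''$, this random variable is independent under $\bP$ of $\omega(u_0,\cdot),\ldots,\omega(u_{i^*-1},\cdot)$. Choose $\eps>0$ small enough so that $\bP(\omega(u_j,u_{j+1})\geq\eps)>0$ for every $j<i^*$ (the positivity assumption $\bP(\omega_e>0)>0$ allows this), and set $\mathcal{F}=\bigcap_{j<i^*}\{\omega(u_j,u_{j+1})\geq\eps\}$. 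Since the vertices $u_0,\ldots,u_{i^*-1}$ are distinct, the constraints defining $\mathcal{F}$ involve independent coordinates, hence $\bP(\mathcal{F})>0$; combined with the independence just observed,
\[
\bE[\qE{u_{i^*}}[T_{V''}];\,\mathcal{F}]=\bP(\mathcal{F})\cdot\bE[\qE{u_{i^*}}[T_{V''}]]=+\infty.
\]

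To conclude, on $\mathcal{F}$, following the edges of $\pi$ yields $\qP{y}(H_{u_{i^*}}<H_\partial)\geq\eps^{i^*}$; the strong Markov property at $H_{u_{i^*}}$ together with $T_V\geq T_{V''}$ gives $\qE{y}[T_V]\geq\eps^{i^*}\qE{u_{i^*}}[T_{V''}]$ on $\mathcal{F}$, so
\[
\aE{y}[T_V]\geq\bE[\qE{y}[T_V];\,\mathcal{F}]\geq\eps^{i^*}\bE[\qE{u_{i^*}}[T_{V''}];\,\mathcal{F}]=+\infty,
\]
the desired contradiction. The delicate point is the dependency structure: neither $\qE{u_{i^*}}[T_V]$ nor $\qP{y}(H_{u_{i^*}}<H_\partial)$ is independent of $(\omega(u_j,\cdot))_{j<i^*}$, so one cannot factorize expectations directly. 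Replacing $T_V$ by the killed exit time $T_{V''}$ is precisely what eliminates the shared dependence on the environment at $u_0,\ldots,u_{i^*-1}$ and makes the decoupling legitimate.
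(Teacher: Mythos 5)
Your proof is correct, and it reaches the conclusion by a somewhat different route than the paper. The paper localizes the divergence by choosing a subset $A\subseteq E$ that is \emph{minimal} among those with $\aE{x}[T_A]=+\infty$; minimality (finiteness of $\aE{x}[T_{A\setminus\{e\}}]$) plus the Markov property at the hitting time of a chosen edge $e\in A$ shows that \emph{every} vertex of $\underline{A}$ has infinite expected exit time from $A$, and the statement for an arbitrary vertex then follows by taking a simple path that stays outside $\underline{A}$ until it enters it and factorizing $\bE[\omega_{e_1}\cdots\omega_{e_n}\,\qE{y}[T_A]]$ by independence, using $\bE[\omega_{e_i}]>0$. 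You instead argue by contradiction along a fixed simple path from the supposedly good vertex $y$ to the bad vertex $x$, take the first index $i^*$ at which the annealed exit time blows up, and localize by decomposing $T_V$ at the exit of $V''=V\setminus\{u_0,\ldots,u_{i^*-1}\}$: this plays exactly the role of the paper's $T_A$, since it makes the divergent quantity $\qE{u_{i^*}}[T_{V''}]$ measurable with respect to the environment on $V''$ alone, hence independent of the environment at the removed path vertices. The transfer step is then the same idea as the paper's, with your event $\mathcal{F}=\bigcap_{j<i^*}\{\omega(u_j,u_{j+1})\geq\eps\}$ (legitimate since $\bP(\omega_e>0)>0$ yields some $\eps$ with $\bP(\omega_e\geq\eps)>0$) replacing the paper's direct factorization through $\bE[\omega_{e_i}]$; both are valid under the independence hypothesis, and your closing remark correctly identifies the dependency issue that forces the passage from $T_V$ to a killed exit time. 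What your route buys is a more elementary, self-contained argument avoiding the minimal-edge-subset machinery; what the paper's buys is the stronger intermediate fact that the whole set $\underline{A}$ is a trap ($\aE{y}[T_A]=+\infty$ for all $y\in\underline{A}$), which matches the trap-subset viewpoint used elsewhere in the article.
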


\begin{proof}
Suppose $x\in V$ satisfies $\aE{x}[T_V]=+\infty$. We denote by $A$ a subset of $E$ satisfying $\aE{x}[T_A]=+\infty$, and being \emph{minimal} (with respect to inclusion) among the subsets of $E$ sharing this property. As $E$ is finite, the existence of such an $A$ is straightforward. 

Let $y\in \underline{A}$: there is an $e\in A$ such that $\underline{e}=y$. Let us prove $\aE{y}[T_A]=+\infty$. We have, by minimality of $A$, $\aE{x}[T_{A\setminus\{e\}}]<\infty$. Let $H_e=\inf\ensemble{n\geq 1}{(X_{n-1},X_n)=e}$. Then: 
\eqd
\aE{x}[T_A]
	& = & \aE{x}[T_A,H_e<T_A]+\aE{x}[T_A,H_e> T_A]\\
	& \leq & \aE{x}[T_A,H_e<T_A]+\aE{x}[T_{A\setminus\{e\}}],
\eqf
hence $\aE{x}[T_A,H_e<T_A]=+\infty$. Thus, using Markov property:
\eqd
+\infty
	& = & \aE{x}[T_A-T_{A\setminus\{e\}}+1,H_e<T_A]=\aE{x}[T_A-(H_e-1),H_e<T_A] \\
	& \leq & \aE{x}[T_A-(H_e-1),H_e-1<T_A] = \aE{x}[T_A\circ\Theta_{H_e-1},H_e-1<T_A]\\
	& = & \bE[\qE{x}[\qE{X_{H_e-1}}[T_A],H_e-1<T_A]] = \bE[\qE{\underline{e}}[T_A]\qP{x}(H_e-1<T_A)]\\
	& \leq & \aE{\underline{e}}[T_A],
\eqf
which gives $\aE{y}[T_A]=+\infty$ as announced. 

Let $z\in V$. If $z\in \underline{A}$, we have of course $\aE{z}[T_V]\geq \aE{z}[T_A]=+\infty$. Suppose $z\in V\setminus \underline{A}$. By strong connexity of $G$, one can find a simple path $e_1,\cdots,e_n$ from $z$ to a point $y=\overline{e_n}\in\underline{A}$ such that $\underline{e_1},\ldots,\underline{e_n}\notin\underline{A}$ (take any simple path from $z$ to any point in $\underline{A}$ and stop it just before it enters $\underline{A}$ for the first time). Then, by Markov property and using independence between the vertices in the environment: 
\eqd
\aE{z}[T_V]
	& \geq & \aE{z}[T_V, X_i=\overline{e_i}\mbox{ for }i=1,\ldots,n]\\
	& = & \bE[\omega_{e_1}\cdots\omega_{e_n}\qE{y}[T_V+n]]\\
	& \geq & \bE[\omega_{e_1}\cdots\omega_{e_n}\qE{y}[T_A+n]\\
	& = & \bE[\omega_{e_1}]\cdots\bE[\omega_{e_n}](\aE{y}[T_A]+n)
\eqf
hence $\aE{z}[T_V]=+\infty$ because the first factors are positive and the last one is infinite \emph{via} the first part of the proof. This concludes. 
\end{proof}

\section{Proof of the ballisticity criterion} \label{sec:bali}

We now consider random walks in i.i.d. Dirichlet environment on $\Z^d$, $d\geq 1$. Let $(e_1,\ldots,e_d)$ denote the canonical basis of $\Z^d$, and $\mathcal{V}=\ensemble{e\in\Z^d}{|e|=1}$. Let $(\alpha_e)_{e\in\mathcal{V}}$ be positive numbers. We will write either $\alpha_i$ or $\alpha_{e_i}$, and $\alpha_{-i}$ or $\alpha_{-e_i}$, $i=1,\ldots,d$. Let us recall the statement of theorem~\ref{thm:bali}: 

\begin{thmnn}
If $\sum\limits_{i=1}^d |\alpha_i-\alpha_{-i}|>1$, then there exists $v\neq 0$ such that, $P_0$-a.s., $\displaystyle\frac{X_n}{n}\to_n v,$
and the following bound holds: 
$$\left| v - \frac{\Sigma}{\Sigma-1}d_m\right|_1 \leq \frac{1}{\Sigma-1},$$
where $\Sigma=\sum_{e\in\mathcal{V}}\alpha_e$ and $d_m=\sum_{i=1}^d \frac{\alpha_i-\alpha_{-i}}{\Sigma} e_i$ is the drift under the averaged environment. 
\end{thmnn}

\begin{proof}
This proof relies on properties and techniques of~\cite{EnriquezSabot06}, along with two improvements: first, thanks to the previous sections, we are able to define the Kalikow random walk under weaker conditions, namely those of the statement; second, we get a finer bound on the drift of this random walk. 

Let us recall a definition. Given a finite subset $U$ of $\Z^d$ and a point $z_0\in U$ such that $\bE[G_U^\omega(z_0,z_0)]<\infty$, the {\bf Kalikow auxiliary random walk} related to $U$ and $z_0$ is the Markov chain on $U\cup\partial_V U$ (where $\partial_V U$ is the set of the vertices neighbouring $U$) given by the following transition probabilities:
$$\mbox{for all $z\in U$ and $e\in\mathcal{V}$, }\widehat{\omega}_{U,z_0}(z,z+e)=\frac{\bE[G_U^\omega(z_0,z)\omega(z,z+e)]}{\bE[G_U^\omega(z_0,z)]}$$
and $\widehat{\omega}_{U,z_0}(z,z)=1$ if $z\in\partial_V U$. For the sake of making formal computations rigorous, Enriquez and Sabot first consider the {\bf generalized Kalikow random walk}. Given an additional parameter $\delta\in(0,1)$, it is defined like the previous walk except that, in place of $G_U^\omega(z_0,z)$, we use the Green function of the random walk under the environment $\omega$ killed at rate $\delta$ and at the boundary of $U$: 
$$G_{U,\delta}^\omega(z_0,z)=E_{z_0,\omega}\left[\sum_{k=0}^{T_U}\delta^k{\bf 1}_{(X_k=z)}\right]$$ 
(and we don't need any assumption on $\bP$ anymore). 

The following identity (equation (2) of~\cite{EnriquezSabot06}) was a consequence of an integration by part formula: for all finite $U\subset\Z^d$, $z\in U$, $e\in\mathcal{V}$, $\delta\in(0,1)$,

$$\widehat{\omega}_{U,z_0,\delta}(z,z+e) = \frac{1}{\Sigma-1}\left(\alpha_e-\frac{\bE[G_{U,\delta}^\omega(z_0,z)p_{\omega,\delta}(z,z+e)]}{\bE[G_{U,\delta}^\omega(z_0,z)] ]} \right)$$

where  $p_{\omega,\delta}(z,z+e))=\omega(z,z+e)(G^\omega_{U,\delta}(z,z)-\delta G^\omega_{U,\delta}(z+e,z))$. Markov property for the killed random walk shows that, for all $z$, the components of $(p_{\omega,\delta}(z,z+e))_{e\in\mathcal{V}}$ are positive and sum up to 1: this is a probability measure. Besides, after a short computation, it can be rewritten as: 
$$p_{\omega,\delta}(z,z+e)=P_{z,\omega}(X_1=z+e|H_\partial<\widetilde{H}_z),$$
which highlights its probabilistic interpretation. Therefore the drift of the generalized Kalikow random walk at $z$ is: 
\begin{equation}\label{eqn:drift}
\widehat{d}_{U,z_0, \delta}(z)
	= \frac{1}{\Sigma-1}\left(\sum_{i=1}^d (\alpha_i-\alpha_{-i})e_i - \widetilde{d}  \right)=\frac{1}{\Sigma-1}(\Sigma d_m - \widetilde{d}), 
\end{equation}
where $\widetilde{d}$ (depending on all parameters) is the expected value of the following probability measure: $$\displaystyle \frac{\bE[G_{U,\delta}^\omega(z_0,z)p_{\omega,\delta}(z,z+\cdot)]}{\bE[G_{U,\delta}^\omega(z_0,z)]}.$$
This measure is supported by $\mathcal{V}$, hence $\widetilde{d}$ belongs to the convex hull of $\mathcal{V}$, which is the closed $|\cdot|_1$-unit ball~${\rm B}_{|\cdot|_1}$: $$|\widetilde{d}|_1\leq 1.$$ 

On the other hand, the assumption gives $\Sigma d_m\notin {\rm B}_{|\cdot|_1}$, and the convexity of ${\rm B}_{|\cdot|_1}$ provides $l\in\R_d\setminus\{0\}$ and $c>0$ (depending only on the parameters $(\alpha_e)_{e\in\mathcal{V}}$) such that, for all $X\in {\rm B}_{|\cdot|_1}$, $$\Sigma d_m\cdot l>c>X\cdot l.$$ 

Therefore, noting that our assumption implies $\Sigma>1$, we have, for every finite subset $U$ of $\Z^d$, every $z_0,z\in U$ and $\delta\in(0,1)$: $$\widehat{d}_{U,z_0, \delta}(z)\cdot l = \frac{1}{\Sigma-1}(\Sigma d_m\cdot l-\widetilde{d}\cdot l)\geq \frac{\Sigma d_m\cdot l - c}{\Sigma-1}>0.$$

It is time to remark that theorem~\ref{thm:zd} applies under our condition: the hypothesis implies $\Sigma > 1$ so that, for all $i$, $2\Sigma-\alpha_i-\alpha_{-i}>1$. This guarantees the integrability of $G^\omega_U(z_0,z)$ and allows us to make $\delta$ converge to 1 in the last inequality (monotone convergence theorem applies because $G_{U,\delta}^\omega$ increases to $G^\omega_U$ as $\delta$ increases to $1$). We get a uniform lower bound concerning the drift of Kalikow random walk: $$\widehat{d}_{U,z_0}(z)\cdot l\geq \frac{\Sigma d_m\cdot l - c}{\Sigma-1}>0.$$
 In other words, Kalikow's criterion is satisfied for finite subsets $U$ of $\Z^d$. As underlined in~\cite{EnriquezSabot06}, this is sufficient to apply Sznitman and Zerner's law of large numbers (\cite{SznitmanZerner}), hence there is a deterministic $v\neq0$ such that, $\bP$-almost surely, 
$$\frac{X_n}{n}\limite{n} v.$$ 
As for the bound on $v$, because of the similar bound for $\widehat{d}_{U,z_0,\delta}(z)$ given by the identity~(\ref{eqn:drift}) and $|\widetilde{d}|_1\leq 1$, it results from proposition 3.2 of~\cite{Sabot04} (this proposition states that $v$ is an accumulation point of the convex hull of $\ensemble{\widehat{d}_{U,z_0,\delta}(z)}{U\mbox{ finite},z_0,z\in U}$ when $\delta$ tends to 1). 
\end{proof}

\subsection*{Concluding remarks and computer simulations}

In the case of $\Z^d$, we have provided a criterion for non-zero limiting velocity. One may prove the following criterion as well, thanks to theorem~\ref{thm:zd}: 

\begin{prop}
If there exists $i\in\{1,\ldots,d\}$ such that $\alpha_i+\alpha_{-i}\geq 2\Sigma-1$, then : 
$$\aPo\AS,\ \frac{X_n}{n}\limite{n} 0.$$
\end{prop}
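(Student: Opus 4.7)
The plan is to use theorem~\ref{thm:zd} to exhibit a small set whose quenched mean exit time has infinite $\bP$-expectation, and then invoke the principle (alluded to in the introduction) that such a ``trap'' forces zero asymptotic velocity.

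First, I apply theorem~\ref{thm:zd} with $s=1$ to the pair $U=\{0,e_i\}$. The only edges $e\in\mathcal{V}$ for which some $x\in U$ satisfies $x+e\in U$ are $\pm e_i$, and for these edges condition (ii) demands $2\Sigma>\alpha_i+\alpha_{-i}+1$; this is precisely the negation of our hypothesis, so condition (i) fails and $\bE[G^\omega_U(x,x)]=+\infty$ for some $x\in U$. By translation invariance of the Dirichlet distribution we may take $x=0$, so $\bE[E_{0,\omega}[T_U]]\geq\bE[G^\omega_U(0,0)]=+\infty$. The same conclusion, by translation, applies to every translate $U_z=\{z,z+e_i\}$, $z\in\Z^d$.

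Second, I argue by contradiction that $|X_n|/n\to 0$ in $P_0$-probability. Suppose $\limsup_n P_0(|X_n|\geq\eps n)>0$ for some $\eps>0$. Along the corresponding subsequence, with non-negligible probability the walk's trajectory contains a linearly growing collection of pairs $U_{z_1},U_{z_2},\ldots$ (with $|z_j-z_k|\geq 2$) that it enters for the first time; let $\sigma_k$ be the first entry into $U_{z_k}$ and $T_k^*$ the length of the corresponding first sojourn. The key point is that at time $\sigma_k$ the walk has used only edges with tails outside $U_{z_k}$, so by the product structure of $\bP$ the two Dirichlet simplices $\omega(z_k,\cdot)$ and $\omega(z_k+e_i,\cdot)$ governing $T_k^*$ are independent of the past; combined with the strong Markov property, this makes $(T_k^*)_k$ an i.i.d.\ sequence under $P_0$ with the annealed distribution of $T_U$, hence of infinite mean by the first step. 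The strong law for non-negative i.i.d.\ variables with infinite expectation then gives $K^{-1}\sum_{k\leq K}T_k^*\to+\infty$, contradicting the obvious bound $\sum_k T_k^*\leq 2n$ (the sojourns are pairwise disjoint sub-intervals of $[0,n]$, the factor~$2$ accounting for the fact that each vertex lies in at most two such pairs).

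The main obstacle is making rigorous the i.i.d.\ structure of $(T_k^*)_k$, since the indices $z_k$ are themselves $\omega$-measurable. The cleanest way is to explore the environment pair-by-pair along the trajectory, so that on each first entry into a fresh $U_{z_k}$ the two relevant Dirichlet simplices are revealed for the first time and are independent of everything observed before. Finally, to upgrade convergence in probability to the claimed $P_0$-a.s.\ statement $X_n/n\to 0$, I would either sharpen the argument into a Borel--Cantelli estimate using the polynomial tail of $G^\omega_U(0,0)$ (cf.\ the remark at the end of Section~\ref{sec:proof}), or invoke a 0-1 law and subadditive ergodic argument for $X_n/n$ in i.i.d.\ Dirichlet environments.
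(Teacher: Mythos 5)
Your overall strategy is the same as the paper's, which itself only sketches this proof: non-integrability of the exit time of any edge set containing an edge $(x,x+e_i)$, then ``usual arguments using the independence in the environment''. Your first step is essentially right, but rather than deciding by translation invariance which $x\in U$ makes condition (i) of theorem~\ref{thm:zd} fail, it is cleaner (and it is what your later argument actually requires) to quote the first implication of theorem~\ref{thm:main} for $A=\{(0,e_i),(e_i,0)\}$: since $\beta_A=2\Sigma-\alpha_i-\alpha_{-i}\leq 1$, one gets $\bE[E_{o,\omega}[T_A]]=\infty$ for \emph{both} vertices $o\in\underline{A}$, hence infinite annealed mean exit time of the pair from either entry point --- needed because the entry vertex of each pair is dictated by the past trajectory.

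Two points need tightening. First, the sojourn times $(T_k^*)$ are not i.i.d.: the entry vertex of the $k$-th fresh pair is measurable with respect to the past, so what you actually get (and all you need) is that, conditionally on the trajectory up to $\sigma_k$, $T_k^*$ has one of two fixed laws, each of infinite mean. Work with the truncated variables $T_k^*\wedge M$, whose conditional means exceed some $m(M)\to_M\infty$, and apply a martingale law of large numbers or Azuma's inequality; note also that your bound $\sum_k T_k^*\leq 2n$ is not correct as stated (a sojourn begun before time $n$ may overrun time $n$; and with disjoint pairs, taken e.g. from a fixed partition of $\Z^d$ into translates of $\{0,e_i\}$, no factor $2$ is needed), but after truncation it becomes the valid bound $\sum_k (T_k^*\wedge M)\leq n+M$. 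Second, and more seriously, your main argument only yields convergence in probability, and of your two proposed upgrades the ``0-1 law / subadditive ergodic'' one cannot simply be invoked: no general almost sure law of large numbers for $X_n/n$ is available off the shelf in this non-uniformly-elliptic i.i.d. setting. The clean fix is to run your own estimate almost surely: on the event $\{\limsup_n |X_n|/n\geq\eps\}$, the number $K_n$ of fresh pairs entered by time $n$ exceeds $c\eps n$ infinitely often, while the conditional (martingale) law of large numbers gives, a.s. on the event that infinitely many pairs are entered, $\sum_{k\leq K}(T_k^*\wedge M)\geq K\,m(M)/2$ for all large $K$; choosing $M$ with $m(M)$ large compared to $1/(c\eps)$ contradicts $\sum_{k\leq K_n}(T_k^*\wedge M)\leq n+M$, so that event is null. (Equivalently, Azuma's inequality gives an exponentially small bound on $P_0(|X_n|\geq\eps n)$ and Borel--Cantelli concludes.) With these repairs your proof is correct and coincides with the intended one.
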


Indeed, the hypothesis implies that the exit time of any subset of edges containing an edge $(x,x+e_i)$, where $x\in\Z^d$, is not integrable, and the proof follows by usual arguments using the independence in the environment. 

The question remains whether one of these criterions is sharp. Actually, computer simulations let us think that neither is. We were especially able to find parameters such that exit times of all finite subsets are integrable and the random walk has seemingly zero speed (more precisely, $X_n$ looks to be on the order of $n^\kappa$ for some $0<\kappa<1$). Figure~\ref{fig:graphe} shows some results obtained with $(\alpha_1,\alpha_{-1},\alpha_2,\alpha_{-2})=(0.5,0.2,0.1,0.1)$. We performed $10^3$ numerical simulations of trajectories of random walks up to time $n_{\max{}} = 10^6$ and compared the averaged values of $y_n=X_n\cdot e_1$ with $C_\alpha n^\alpha$, where $C_\alpha$ is chosen so as to make curves coincide at $n=n_{\max{}}$. The first graph shows the average of $y_n$ and the second one the maximum over $n\in\{10^5+1,\ldots,10^6\}$ of $\left|1-\frac{y_n}{C_\alpha n^\alpha}\right|$, as $\alpha$ varies. The minimizing $\alpha$ is 0.9, corresponding to a small uniform relative error of .0044. However we could not yet prove that such an intermediary regime happens. 

\begin{figure} [b!]
\includegraphics[width=18cm]{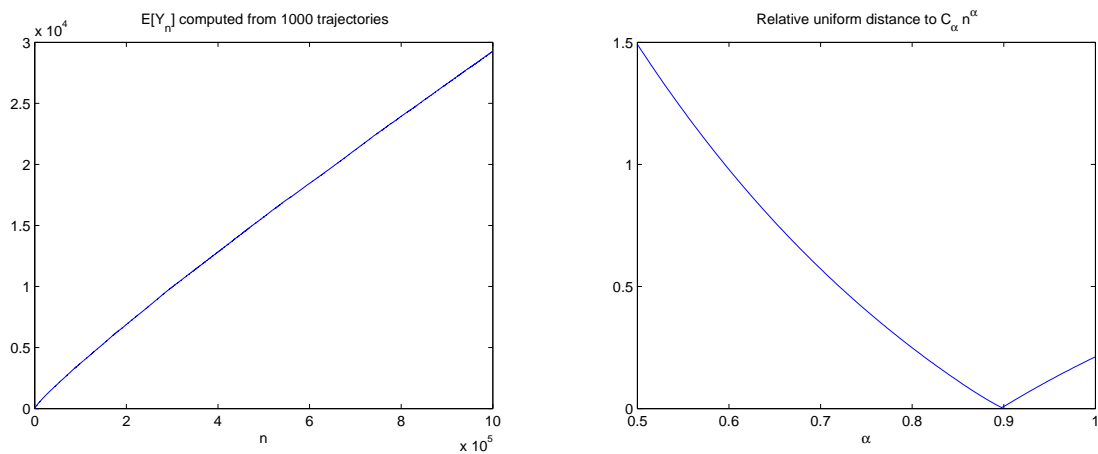} 
\caption{These plots refer to computer simulation : averages are taken over $10^3$ trajectories up to time $10^6$ (see last part of the article)}
\label{fig:graphe}
\end{figure}

\subsection*{Aknowledgements}

The author wishes to thank his advisor Christophe Sabot for suggesting the problem and for his valuable suggestions.


\begin{thebibliography}{2}

   \bibitem[EnSa02]{EnriquezSabot02} Enriquez, N. and Sabot, C., Edge oriented reinforced random walks and RWRE, C. R. Math. Acad. Sci. Paris 335 (2002), no. 1B, 880-914
   \bibitem[EnSa06]{EnriquezSabot06} Enriquez, N. and Sabot, C., Random walks in a Dirichlet environment, electronic journal of probability vol.11 (2006), 802-817
   \bibitem[Ka81]{Kalikow} Kalikow, S., Generalized random walk in a random environment, Annals of Proba. 9 (1981), no. 5, 753-768
   \bibitem[Sa04]{Sabot04} Sabot, C., Ballistic random walks in random environments at low disorder, Annals of Proba. 16 (2004), no. 4, 2996-3023
   \bibitem[So75]{Solomon75} Solomon, F., Random walks in a random environment, Annals of Proba. 3 (1975), 1-31
   \bibitem[SzZe99]{SznitmanZerner} Sznitman, A.-S. and Zerner, M., A law of large numbers for random walks in random environment, Ann. Probab. 27 (1999), no. 4, 1851-1869
   \bibitem[Sz01]{Sznitman01} Sznitman, A.-S., On a class of transient random walks in random environment, Annals of Proba. 29 (2001), 723-764
   \bibitem[Sz04]{Sznitman02} Sznitman, A.-S., An effective criterion for ballistic behavior of random walks in random environment, Probab. Theory Related Fields 122 (2002), 509-544
   \bibitem[Wi63]{Wilks} Wilks, S., Mathematical statistics, John Wiley \& Sons, New York (1962)
\end{thebibliography}
\end{document}